\pgfplotsset{%
   every tick label/.append style = {font=\tiny},
   every axis label/.append style = {font=\scriptsize}
}
\numberwithin{equation}{section}
\newtheorem{thm}{Theorem}[section]
\newtheorem*{thm*}{Theorem}
\newtheorem{cor}[thm]{Corollary}
\newtheorem{lem}[thm]{Lemma}
\theoremstyle{definition}
\newtheorem{rem}[thm]{Remark}
\newtheorem{dfn}[thm]{Definition}
\newtheorem{rmk}[thm]{Remark}
\newcommand{\N}{\mathds{N}}
\newcommand{\Z}{\mathds{Z}}
\newcommand{\R}{\mathds{R}}
\newcommand{\T}{\mathds{T}}
\newcommand{\diff}{\mathrm{d}}
\newcommand{\fl}{\mathrm{flat}}
\newcommand{\con}{\mathrm{con}}
\begin{document}

\title[Normal forms for strong magnetic systems]{Normal forms for strong magnetic systems on surfaces:\\trapping regions and rigidity of Zoll systems}

\author[L. Asselle]{Luca Asselle}
\address{Justus Liebig Universit\"at Giessen, Mathematisches Institut \newline \indent Arndtstrasse 2, 35392 Giessen, Germany}
\email{luca.asselle@math.uni-giessen.de}

\author[G. Benedetti]{Gabriele Benedetti}
\address{Universit\"at Heidelberg, Mathematisches Institut, \newline\indent Im Neuenheimer Feld 205, 69120 Heidelberg, Germany}
\email{gbenedetti@mathi.uni-heidelberg.de}

\date{November, 2019}
\subjclass[2000]{37J99, 58E10}
\keywords{Magnetic flows, KAM tori, Zoll systems}

\begin{abstract}
We prove a normal form for strong magnetic fields on a closed, oriented surface and use it to derive two dynamical results for the associated flow. First, we show the existence of KAM tori and trapping regions provided a natural non-resonance condition holds. Second, we prove that the flow cannot be Zoll unless (i) the Riemannian metric has constant curvature and the magnetic function is constant, or (ii) the magnetic function vanishes and the metric is Zoll. We complement the second result by exhibiting an exotic magnetic field on a flat two-torus yielding a Zoll flow for arbitrarily weak rescalings. 
\end{abstract}

\maketitle

\section{Introduction}
Let $M$ be a closed, oriented surface. A \textit{magnetic system} on $M$ is a pair $(g,b)$, where $g$ is a Riemannian metric on $M$ and $b:M\to\R$ is a function, which we refer to as the magnetic function. A $(g,b)$-\textit{geodesic} is a curve $\gamma:\R\to M$ which is parametrised by arc-length and solves the equation
\begin{equation}\label{e:gbgeo}
\kappa_\gamma(t)=b(\gamma(t)),\qquad\forall\,t\in\R,
\end{equation}
where $\kappa_\gamma$ is the geodesic curvature of $\gamma$.

The term ``magnetic system" refers to the fact that a solution of \eqref{e:gbgeo} describes a trajectory of a particle $\gamma$ with unit charge and speed under the effect of the Lorentz force generated by a stationary magnetic field. To fix ideas, if $M$ is embedded in the euclidean three-dimensional space $\R^3$ and $B:\R^3\to\R^3$ is a magnetic field in the ambient space, then $g$ is the restriction of the euclidean metric on $M$ and $b$ is the inner product of $B$ with the unit normal to $M$ in $\R^3$.

The tangent lifts $(\gamma,\dot\gamma)$ of $(g,b)$-geodesics yield the trajectories of a flow $\Phi_{(g,b)}:SM\times\R\to SM$ on the unit sphere bundle $SM$, whose dynamical properties have been the subject of intensive research since the seminal work of Arnold in the early 60's \cite{Arn61}, see \cite{Ginzburg:1994} for a survey. In the present paper, we will study two important aspects of the dynamics of $\Phi_{(g,b)}$.

The first one is the existence of trapping regions for the flow $\Phi_{(g,b)}$, when the magnetic function is \textit{strong}. Here ``strong" means that $b=\epsilon^{-1}b_1$ for some non-vanishing function $b_1$ and small positive number $\epsilon$. Such a study was initiated by Castilho in \cite{Castilho:2001} and relies on the existence of KAM tori via the Moser twist theorem \cite{Mos62}. We describe this first aspect in detail in Section \ref{ss:kam}. The second one is the existence of 
magnetic systems $(g,b)$ that produce a very simple dynamics, namely that their flow $\Phi_{(g,b)}$ induces a free circle action on $SM$, up to reparametrization. We call such systems \textit{Zoll}. A trivial example of a Zoll system is given by $(g_\mathrm{con},b_{\mathrm{con}})$, where $g_{\mathrm{con}}$ has constant Gaussian curvature $K_{\mathrm{con}}$ and $b_{\mathrm{con}}$ is a constant magnetic function such that $b^2_{\mathrm{con}}+K_{\mathrm{con}}>0$. In this case, $(g_\mathrm{con},b_{\mathrm{con}})$-geodesics are boundaries of geodesic balls and, hence the system is Zoll. Purely Riemannian Zoll systems, namely those with $b=0$, are possible only on $M=S^2$. An infinite-dimensional family of such examples was constructed in \cite{Zol03} among spheres of revolution and in \cite{Gui76}, for the general case.

Key to both problems above is a Hamiltonian normal form for the flow $\Phi_{(g,\epsilon^{-1} b_1)}$. In order to describe it, we first need to see how to associate a Hamiltonian flow on the tangent bundle $TM$ to a general magnetic flow $\Phi_{(g,b)}$. We refer to \cite{Ben14} for the proof of the statements below.

Let $\mu$ be the area form given by the metric $g$ and the orientation on $M$. Moreover, let $\lambda\in\Omega^1(TM)$ be the Hilbert form of $g$:
\[
\lambda_{(q,v)}\cdot\xi=g_q(v,\diff\pi\cdot \xi),\qquad \forall\,(q,v)\in TM,\ \xi\in T_{(q,v)}TM,
\]
where $\pi:TM\to M$ is the canonical projection, $\pi(q,v)=q$. Notice that $\lambda$ is the pull-back of the Liouville one-form on $T^*M$ by 
means of the metric $g$. We define the symplectic form $\omega_{(g,b)}\in\Omega^2(TM)$ on $TM$ via
\[
\omega_{(g,b)}:=\diff\lambda-\pi^*(b\mu)
\]
and the kinetic Hamiltonian function via
\[
H_g:TM\to\R,\qquad H_g(q,v)=\frac12|v|^2_q,\qquad\forall\,(q,v)\in TM,
\]
where $|\cdot|$ is the norm associated with $g$. We denote by $X^{\omega_{(g,b)}}_{H_g}$ the Hamiltonian vector field of $H_{g}$ with respect to $\omega_{(g,b)}$. Let $S_rM:=\{(q,v)\in TM\ |\ |v|_q=r\}$ be the sphere bundle of radius $r>0$. Then $S_rM$ is invariant under the Hamiltonian flow and $X^{\omega_{(g,b)}}_{H_g}|_{S_rM}$ is a nowhere vanishing section of the characteristic distribution $\ker(\omega_{(g,b)}|_{S_rM})\subset T(S_rM)$.

We can now state the promised relationship between the magnetic and the Hamiltonian flow. Denote by $\Gamma:\R\to M$ a curve with constant speed $r=|\dot\Gamma|>0$. Then $(\Gamma,\dot\Gamma)$ is a trajectory of the Hamiltonian flow on ${S_rM}$ if and only if the arc-length reparametrization $\gamma:=\Gamma(\cdot/r)$ is a $(g,\tfrac{1}{r}b)$-geodesic.

Applying the above argument with $r=1$ and $b=\epsilon^{-1} b_1$, we see that $\Phi_{(g,\epsilon^{-1}b_1)}$-geodesics are the integral curves of the characteristic distribution
\[
\ker(\omega_\epsilon|_{SM})\subset T(SM),\qquad \omega_\epsilon:=\epsilon\diff\lambda-\pi^*(b_1\mu)=\epsilon\omega_{(g,\epsilon^{-1} b_1)}.
\]
\subsection{A Hamiltonian normal form}
\mbox{}\medskip

We will find now a Hamiltonian normal form for $\Phi_{(g,\epsilon^{-1} b_1)}$ over an open set $U\subset M$ on which $b_1$ is nowhere vanishing and the circle bundle $\pi:SU\to U$ admits a section. Such a normal form can be seen as the Hamiltonian upgrade of a vector-field normal form for $\Phi_{(g,\epsilon^{-1} b_1)}$ which is due to Arnold \cite[Theorems 2 and 3]{Arn97}, and on which we will comment more at the end of this subsection.

Thus let $W:U\to SU$ be a section and write $\theta:SU\to \T$ for the angle between a unit tangent vector and $W$, measured according to the Riemannian metric and the given orientation on $M$. We refer to $\theta$ as the \textit{angular function} associated with $W$.
\begin{thm}\label{t:nf}
Let $(g,b_1)$ be a magnetic system on a closed, oriented surface $M$. Let $U\subset M$ be an open set on which $b_1$ does not vanish and such that $\pi:SU\to U$ admits
a section $W:U\to SU$. Denote with $\theta:SU\to \T$ the angular function associated with $W$. Then for every open set $U'$ such that $\overline{U'}\subset U$, there exist $\epsilon_0>0$ and an isotopy of embeddings $\Psi_\epsilon:SU'\to SU$, $\epsilon\in[0,\epsilon_0)$, with $\Psi_0$ being the standard inclusion $SU'\hookrightarrow SU$, such that
\[
\Psi_\epsilon^*\omega_\epsilon|_{SM}=\diff(H_\epsilon\diff\theta)-\pi^*(b_1\mu),
\]
where $H_\epsilon:SU'\to\R$ is a path of functions such that
\begin{equation}\label{e:H1}
H_\epsilon=-\frac{\epsilon^2}{2b_1\circ\pi}+o(\epsilon^2).
\end{equation}
If $b_1$ is constant, then we can choose $\Psi_\epsilon$ so that
\begin{equation}\label{e:H2}
H_\epsilon=-\frac{\epsilon^2}{2b_1}-\frac{\epsilon^4}{(2b_1)^3}K\circ\pi+o(\epsilon^4),
\end{equation}
where $K:M\to\R$ is the Gaussian curvature of $g$.
\end{thm}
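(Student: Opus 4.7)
The proof reduces to a formal-power-series construction of $\Psi_\epsilon$ and $H_\epsilon$ in $\epsilon$, built order by order, followed by a Borel-summed smooth realization on the compact set $\overline{SU'}$.

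First I would set up local coordinates on $SU$. The section $W$ trivializes $SU \cong U \times \T$ with angular coordinate $\theta$; on $SU$ introduce the canonical coframe $(\alpha_1, \alpha_2, \alpha_3)$ with $\alpha_1 = \lambda|_{SU}$ (the Hilbert form), $\alpha_2(\xi) := g(v^\perp, d\pi\,\xi)$ its rotated partner, and $\alpha_3 := d\theta - \pi^*\eta$ where $\eta$ is the Levi-Civita connection 1-form of the frame $(W, W^\perp)$. The Cartan structure equations give $d\alpha_1 = \alpha_3\wedge\alpha_2$ and $d\alpha_3 \propto K\,\alpha_1\wedge\alpha_2$, combined with $\alpha_1\wedge\alpha_2 = \pi^*\mu$, yielding
\[
\omega_\epsilon|_{SM} = \epsilon\,\alpha_3\wedge\alpha_2 - b_1\,\alpha_1\wedge\alpha_2.
\]
Both this and the target $d(H_\epsilon\,d\theta) - \pi^*(b_1\mu)$ are closed rank-2 two-forms on the 3-manifold $SM$ (closedness uses the fact that $b_1\mu$ is top-degree on the surface $M$).

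Next, write $\Psi_\epsilon$ as the time-one flow of a vector field $V_\epsilon = \epsilon V_1 + \epsilon^2 V_2 + \cdots$ and $H_\epsilon = \epsilon^2 H^{(2)} + \epsilon^3 H^{(3)} + \cdots$, expand $\Psi_\epsilon^*\omega_\epsilon|_{SM}$ via Cartan's magic formula, and match orders against the target. At order $\epsilon^1$ the equation $L_{V_1}\omega^{(0)} = -d\lambda$ with $\omega^{(0)} = -b_1\pi^*\mu$ becomes $d(\iota_{V_1}(b_1\pi^*\mu)) = d\lambda$, solvable by $\iota_{V_1}(b_1\pi^*\mu) = \lambda$; this yields an explicit horizontal $V_1$ of magnitude $\sim 1/b_1$. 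At order $\epsilon^2$ the equation $L_{V_2}\omega^{(0)} + \tfrac12 L_{V_1}\omega^{(1)} = dH^{(2)}\wedge d\theta$, after substituting $L_{V_1}\omega^{(1)} = d(\alpha_3/b_1)$, becomes the requirement that
\[
\tfrac12\bigl(d(1/b_1)\wedge\alpha_3 + (K/b_1)\,\alpha_1\wedge\alpha_2\bigr) - dH^{(2)}\wedge d\theta
\]
be the differential of a horizontal 1-form. Decomposing by $d\theta$-content and imposing $\theta$-periodicity of the primitive produces an averaged equation that pins down $H^{(2)} = -1/(2b_1\circ\pi)$, proving \eqref{e:H1}.

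The iteration continues in the same spirit: at each order $k$ the cohomological equation splits into a ``$\theta$-oscillating'' part absorbed into $V_k$ and a ``$\theta$-mean'' part fixing $H^{(k)}$. When $b_1$ is constant, $d(1/b_1) \equiv 0$ and odd-order averages vanish by parity of $\cos\theta,\sin\theta$ integrals; at order $\epsilon^4$ the curvature enters through $d\alpha_3 \propto K\,\pi^*\mu$, and the averaged equation yields $H^{(4)} = -K/(2b_1)^3$, proving \eqref{e:H2}. A Borel sum of the formal power series (only finitely many orders are needed to realize the stated asymptotics) gives the smooth $H_\epsilon$ and $V_\epsilon$, and the time-one flow is a well-defined embedding $\Psi_\epsilon: SU' \to SU$ for all $\epsilon < \epsilon_0$ thanks to the compactness of $\overline{SU'}$. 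The main obstacle is the bookkeeping required to extract the precise coefficients in \eqref{e:H1} and especially \eqref{e:H2}: the curvature correction sits at order $\epsilon^4$, so the iteration must be pushed two more steps (order 3 vanishes by parity), each time applying the Cartan structure equations and computing trigonometric $\theta$-averages. The self-referential entanglement of $\Psi_\epsilon$ and $H_\epsilon$ is resolved automatically by the order-by-order structure, which decouples them at each step.
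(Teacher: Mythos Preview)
Your overall strategy—build an isotopy and match $\Psi_\epsilon^*\hat\omega_\epsilon$ to the target order by order—is the paper's, but your passage from the formal expansion to the exact identity has a gap.

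The paper does not construct $\Psi_\epsilon$ from a formal series. It uses Moser's trick directly: $\Psi_\epsilon$ is the flow in the parameter $\epsilon$ of an $\epsilon$-dependent horizontal field $Z_\epsilon\in\mathcal H=\ker\diff\theta$, and differentiating the target equation in $\epsilon$ yields
\[
\hat\lambda+\iota_{Z_\epsilon}\hat\omega_\epsilon+\diff c_\epsilon=(\partial_\epsilon H_\epsilon)\circ\Psi_\epsilon^{-1}\,\diff\theta.
\]
The horizontal part of this determines $Z_\epsilon$ uniquely and \emph{smoothly in $\epsilon$} (because $\hat\omega_\epsilon|_{\mathcal H}$ is non-degenerate for small $\epsilon$); the vertical part then defines $\partial_\epsilon H_\epsilon$. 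Hence the exact equality $\Psi_\epsilon^*\hat\omega_\epsilon=\diff(H_\epsilon\diff\theta)-\pi^*(b_1\mu)$ holds \emph{by construction} for every $\epsilon$, and one merely Taylor-expands the already well-defined $H_\epsilon$, tuning the free function $c_\epsilon$ (namely $c_0'=b_1^{-2}\alpha(v^\perp)$ and then $c_0''=-b_1^{-3}a$, with $a$ supplied by Lemma~\ref{l:K}) to kill the $\epsilon^3$-term and extract the curvature at order $\epsilon^4$.

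Your route—write $\Psi_\epsilon$ as the time-one flow of $V_\epsilon=\sum\epsilon^kV_k$, solve for $V_k,H^{(k)}$ formally, then Borel-sum—does not deliver the exact equation the theorem asserts. A Borel realization produces smooth $V_\epsilon,H_\epsilon$ with the right Taylor jets, but $\Psi_\epsilon^*\hat\omega_\epsilon$ and $\diff(H_\epsilon\diff\theta)-\pi^*(b_1\mu)$ will then differ by a closed $2$-form that is flat in $\epsilon$, and nothing forces that remainder to have the special form $\diff(\tilde H\,\diff\theta)$. Truncating at finite order leaves an $O(\epsilon^{N+1})$ error with the same defect. Since the downstream applications require $\Psi_\epsilon$ to conjugate the characteristic line fields exactly, this is a genuine gap, not cosmetic. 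The repair is to swap the time-one-flow ansatz for the Moser $\epsilon$-flow, which separates the existence of an exact $(\Psi_\epsilon,H_\epsilon)$ from the computation of the expansion.

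A smaller point: your parity heuristic for the vanishing of the $\epsilon^3$-term is plausible, but in the paper this vanishing is not a symmetry argument; it is engineered by the explicit choice $c_0'(v)=b_1^{-2}\alpha(v^\perp)$, and the $\epsilon^4$-coefficient then comes out of the identity $\tfrac12K\circ\pi=f+\partial_\theta a$ of Lemma~\ref{l:K} rather than a trigonometric $\theta$-average.
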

\begin{rem}
There is an analogous normal form over open sets $U$ of $M$, whose tangent bundle is not trivial. This happens exactly when $U=M$ and $M\neq\mathbb T^2$. In this situation, $\omega_\epsilon|_{SM}=\diff\alpha_\epsilon$ for a path of contact forms $\alpha_\epsilon$ on $SM$. Integrating a suitable vector field $Z_\epsilon$, one finds a path of diffeomorphisms $\Psi_\epsilon:SM\to SM$ such that
\[
\Psi_\epsilon^*\alpha_\epsilon=(1-H_\epsilon)\alpha_0,
\]
where $H_\epsilon$ is of the form \eqref{e:H1} or (when $b_1$ is a non-zero constant) \eqref{e:H2}.
\end{rem}
The normal form up to order $\epsilon^2$ contained in Equation \eqref{e:H1} was already proved by Castilho in \cite[Theorem 3.1]{Castilho:2001} and used to show the existence of KAM tori under certain assumptions on $b_1$, see Section \ref{ss:kam} for more details. A similar normal form was also developed by Raymond and V\~{u} Ng\d{o}c in \cite{Raymond:2015} to study the semi-classical limit of magnetic systems.

Our main contribution in Theorem \ref{t:nf} is, therefore, to push Castilho's normal form to order $\epsilon^4$ when the magnetic function $b_1$ is constant, see Equation \eqref{e:H2}. We shall mention that our method of proof is slightly different from his, as we construct the vector field $Z_\epsilon$ generating the isotopy $\Psi_\epsilon$ at once using Moser's method \cite{Mos65}: $Z_\epsilon$ is the unique vector field contained in the distribution $\mathcal H$ tangent to the level sets of $\theta$ satisfying
\[
\omega_\epsilon(Z_\epsilon,\cdot\,)=-\lambda-\diff c_\epsilon \qquad\text{on }\mathcal H
\]
for some function $c_\epsilon:SU\to\R$ that we will suitably choose.

Let us now comment on the first dynamical implications of Theorem \ref{t:nf}. This result tells us that we can read off $(g,\epsilon^{-1}b_1)$-geodesics as trajectories of a time-dependent Hamiltonian flow on $U'$. To this purpose, let us write by $H_{\epsilon,\theta}:U'\to\R$, $\theta\in \T$, the function $H_{\epsilon,\theta}(q):=H_\epsilon( e^{i\theta}W(q))$, where $e^{i\theta}:SU\to SU$ is the fiberwise rotation by angle $\theta$. We define the $\theta$-dependent vector field $X_{H_{\epsilon,\theta}}$ on $U'$ by
\begin{equation}\label{e:X1}
b_1\mu(X_{H_{\epsilon,\theta}},\cdot\,)=-\diff H_{\epsilon,\theta}
\end{equation}
and the vector field $X_{H_\epsilon}$ on $SU'$ by
\begin{equation}\label{e:X2}
X_{H_\epsilon}=\tilde X_{H_{\epsilon,\theta}}+\partial_\theta,
\end{equation}
where $\tilde X_{H_{\epsilon,\theta}}$ denotes the lift of $X_{H_{\epsilon,\theta}}$ tangent to the level sets of the angular function. If $\Phi_{H_\epsilon}^t$ is the flow of $X_{H_\epsilon}$ on $SU'$, then for all $\theta_0\in \T$ and $\theta_1\in\R$
\[
\varphi^{\theta_0,\theta_1}_\epsilon:=\pi\circ\Phi_{H_\epsilon}^{\theta_1}\circ e^{i\theta_0}W 
\]
is the flow of $X_{H_{\epsilon,\theta}}$ on $U'$ and
\begin{equation}\label{e:theta+}
\theta\big(\Phi^{\theta_1}_{H_\epsilon}( e^{i\theta_0}W)\big)=\theta_0+\theta_1.
\end{equation}
We see that $X_{H_\epsilon}$ spans the kernel of $-\pi^*(b_1\mu)+\diff(H_\epsilon\diff\theta)$. Therefore, $\Psi_\epsilon$ sends trajectories of the flow $\Phi_{H_\epsilon}$ to tangent lifts of $(g,\epsilon^{-1}b_1)$-geodesics, up to reparametrization. More precisely, let $z\in SU'$ and consider $\eta_{z}:[0,\theta_\infty)\to SU'$ to be the maximal solution of the flow $\Phi_{H_\epsilon}$ with $\eta_{z}(0)=z$. Let $\gamma_{\Psi_\epsilon(z)}:\R\to M$ be the $(g,\epsilon^{-1}b_1)$-geodesic with $\dot\gamma_{\Psi_\epsilon(z)}(0)=\Psi_\epsilon(z)$. There exists an increasing function $t:[0,\theta_\infty)\to[0,\ell_\infty)$ such that
\begin{equation}\label{e:gammaeta}
\dot\gamma_{\Psi_\epsilon(z)}(t(\theta))=\Psi_\epsilon(\eta_z(\theta)),\qquad\forall\,\theta\in[0,\theta_\infty).
\end{equation}

From Formulae \eqref{e:H1} and \eqref{e:H2} for $H_\epsilon$, we see that $(g,\epsilon^{-1}b_1)$-geodesics \begin{enumerate}[(i)]
\item follow for a long time non-degenerate level sets of $b_1$ with a drift velocity proportional to $\epsilon^2|\diff (b_1^{-2})|$;
\item follow for a long time non-degenerate level sets of $K$, if $b_1$ is a non-zero constant, with a drift velocity proportional to $\epsilon^4b_1^{-4}|\diff K|$.
\end{enumerate}
Such a dichotomy follows already from Arnold's normal form for vector fields cited above \cite{Arn97}, which shows that the magnetic function $b_1$ or the Gaussian curvature $K$, if $b_1$ is constant, are \textit{adiabatic invariants} for the flow $\Phi_{(g,\epsilon^{-1}b_1)}$. Roughly speaking, a quantity $I$ is an adiabatic invariant if, for some $\alpha>0$ and any threshold $\delta>0$, there exists an $\epsilon_\delta>0$ such that for all $\epsilon<\epsilon_\delta$ the change of $I$ along solutions with parameter $\epsilon$ is smaller than $\delta$ for a time-interval of length $\epsilon^{-\alpha}$ (see \cite[Section 52]{Arnold:1978aq} for more details). We remark, however, that the normal form for vector fields is not enough for the two applications considered in this paper and we crucially need the Hamiltonian normal form contained in Theorem \ref{t:nf}.

We conclude this subsection by observing that the theory of adiabatic invariants for magnetic fields in three-dimensional euclidean space plays an important role in plasma physics as it can be applied to confine charged particles in some region of space; see \cite{Mar20} and the references therein. We refer to \cite{BS94} for a mathematical treatment of magnetic adiabatic invariants in euclidean space and to \cite{HKRV16} for their applications to the spectrum of the magnetic Laplacian.

\subsection{Application I: trapped motions}\label{ss:kam}
\mbox{}\medskip

Throughout this subsection, we denote by $(g,b_1)$ a magnetic system on a closed, oriented surface $M$ such that $b_1:M\to(0,\infty)$ is positive. We also denote by $\mu$ the area form given by $g$ and the orientation on $M$. To write the statements in a more compact form, we define the function
\begin{equation}\label{e:zeta}
\zeta:M\to\R,\qquad \zeta=\begin{cases}
K&\text{if } b_1 \text{ is constant},\\
b_1^{-2}&\text{otherwise}.
\end{cases}
\end{equation}
Moreover, we will denote by $L_{c_0}$ a non-empty connected component of a regular level set $\{\zeta=c_0\}$ for some $c_0\in\R$. In particular, $L_{c_0}$ is an embedded circle belonging to some family of embedded circles $c\mapsto L_{c}$ with $c\in(c_0-\delta_0,c_0+\delta_0)$ for a suitable $\delta_0>0$. There are action-angle coordinates $(I,\varphi)\in(I_-,I_+)\times \T$ around $L_{c_0}$ for the symplectic form $\mu$ such that $\zeta=\tilde\zeta\circ I$ for some function $\tilde\zeta:(I_0-\delta_0',I_0+\delta_0')\to (c_0-\delta_0,c_0+\delta_0)$ with $\tilde\zeta(I_0)=c_0$ and some $\delta_0'>0$.
The circle $L_{c_0}$ is said to be \textit{non-resonant} if
\begin{equation}\label{e:nr}
\frac{\diff^2 \tilde\zeta}{\diff I^2}(I_0)\neq0.
\end{equation}
Combining the Hamiltonian normal form in Theorem \ref{t:nf} with the Moser twist theorem \cite{Mos62}, one can show that $(g,\epsilon^{-1}b_1)$-geodesics are trapped in neighborhoods of non-resonant circles. 
\begin{thm}\label{t:kam}
Let $L_{c_0}$ be non-resonant and let $U$ be a neighborhood of $L_{c_0}$. Then there exists $\epsilon_0>0$ and a neighborhood $U_1\subset U$ of $L_{c_0}$ such that, for every $\epsilon\in(0,\epsilon_0)$, all $(g,\epsilon^{-1}b_1)$-geodesics with starting point in $U_1$ remain in $U$ for all times.
\end{thm}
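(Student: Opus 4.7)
The plan is to apply Theorem \ref{t:nf} to recast $(g,\epsilon^{-1}b_1)$-geodesics near $L_{c_0}$ as orbits of the suspension of a near-integrable area-preserving diffeomorphism, and then obtain trapping KAM circles via Moser's twist theorem \cite{Mos62}.

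First I shrink $U$ to a neighborhood $U'$ of $L_{c_0}$ with $\overline{U'}\subset U$ on which $b_1$ is bounded away from zero and $\pi:SU\to U$ admits a section $W$; this is possible because $L_{c_0}$ is an embedded circle on which $b_1$ does not vanish. Applying Theorem \ref{t:nf} and the discussion around \eqref{e:X1}--\eqref{e:gammaeta} conjugates, up to reparametrization, the magnetic geodesic flow on $\Psi_\epsilon(SU')$ to the flow of $X_{H_\epsilon}$ on $SU'$, along which $\theta$ grows at unit speed by \eqref{e:theta+}. I then consider the Poincar\'e return map $P_\epsilon:=\varphi^{0,2\pi}_\epsilon:U'\to U'$ associated to the section $W$; it preserves the area form $b_1\mu$, because the flow of $X_{H_\epsilon}$ preserves the presymplectic form $\diff(H_\epsilon\diff\theta)-\pi^*(b_1\mu)$, which restricts to $-\pi^*(b_1\mu)$ on $\{\theta=0\}$. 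Any $P_\epsilon$-invariant annulus around $L_{c_0}$ suspends to an $X_{H_\epsilon}$-invariant neighborhood of $\pi^{-1}(L_{c_0})$ in $SU'$ and hence, via $\Psi_\epsilon$, to the desired trapping region for the magnetic flow.

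Next I pass to action-angle coordinates $(I,\varphi)\in(I_0-\delta,I_0+\delta)\times\T$ on a neighborhood of $L_{c_0}$, in which $\mu=\diff I\wedge \diff\varphi$ and $\zeta=\tilde\zeta(I)$. Since $b_1$ is either constant (Case 1: $\zeta=K$) or satisfies $b_1=\zeta^{-1/2}$ (Case 2), it depends only on $I$; so does $b_1\mu$. Setting $N=2$ in Case 2 and $N=4$ in Case 1, the leading dynamically nontrivial part of $H_\epsilon$ in \eqref{e:H1}--\eqref{e:H2} is a function $H_\epsilon^{0}=-\epsilon^N h(\tilde\zeta(I))$ of $I$ alone (in Case 1 the $\epsilon^2$-term is a constant and contributes nothing). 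The time-$2\pi$ flow of $X_{H_\epsilon^{0}}$ is therefore a pure twist $T_\epsilon:(I,\varphi)\mapsto(I,\varphi+\alpha_\epsilon(I))$, and a direct computation---which simplifies in Case 2 thanks to the algebraic identity $b_1=\zeta^{-1/2}$---gives
\[
\alpha_\epsilon'(I_0)=\epsilon^N c_0\,\tilde\zeta''(I_0)+o(\epsilon^N),
\]
with $c_0\neq 0$ depending only on $b_1(I_0)$.

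Finally, I would quantify the deviation $P_\epsilon-T_\epsilon$. The remainder $H_\epsilon-H_\epsilon^{0}$ is $o(\epsilon^N)$ in every $C^k$ norm, so a standard Gronwall estimate for the Hamiltonian equations yields $P_\epsilon-T_\epsilon=o(\epsilon^N)$ in $C^k$ as well. Rescaling the action as $J=\epsilon^{-N}(I-I_0)$ normalizes the twist coefficient to $c_0\tilde\zeta''(I_0)\neq 0$ (by \eqref{e:nr}) and makes the perturbation $o(1)$ as $\epsilon\to 0$. Moser's twist theorem then applies and produces, for every sufficiently small $\epsilon$, pairs of $C^1$ invariant circles of $P_\epsilon$ lying on either side of $L_{c_0}$ and contained in the prescribed neighborhood $U$; the annular region enclosed by such a pair defines the sought neighborhood $U_1$ after suspension. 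I expect the main technical obstacle to be the uniform-in-$\epsilon$ bookkeeping required to verify Moser's smallness condition in the rescaled coordinates; the conceptual content is entirely encoded in the normal form and in the identification of the twist.
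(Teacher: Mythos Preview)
Your proposal is correct and follows essentially the same route as the paper: apply the normal form Theorem~\ref{t:nf} and then invoke Moser's twist theorem near the non-resonant circle. The only differences are cosmetic. The paper packages the KAM step as a flow version of Moser's theorem (Theorem~\ref{t:moser}) for Hamiltonians of the form $H_{\epsilon,\theta}=h_0(\epsilon)+\epsilon^k h_1(I)+o(\epsilon^k)$, applied directly to $h_1=H_1=-\tfrac12 b_1^{-1}$ with symplectic form $b_1\mu$, whereas you pass to the Poincar\'e return map $P_\epsilon=\varphi^{0,2\pi}_\epsilon$, compute the twist $\alpha'_\epsilon(I_0)$ by hand, and rescale. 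The paper also replaces your direct twist computation by the observation (Remark~\ref{r:scale}) that non-resonance of $L_{c_0}$ for $\zeta=b_1^{-2}$ with respect to $\mu$ is equivalent to non-resonance for $-\tfrac12 b_1^{-1}$ with respect to $b_1\mu$, so that the hypothesis of Theorem~\ref{t:moser} is met without further calculation. Your acknowledged ``bookkeeping'' for the small-twist rescaling is precisely what Theorem~\ref{t:moser} absorbs.
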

Following Castilho \cite[Corollary 1.3]{Castilho:2001}, one can use Theorem \ref{t:kam} to prove trapping of $(g,\epsilon^{-1}b_1)$-geodesics around circles $L$ of minima (or maxima) for $\zeta$. The key idea is that $L$ is approximated on either sides by non-resonant circles.
\begin{cor}\label{c:mincircle}
Let $L\subset M$ be an embedded circle. Suppose that $L$ is an isolated critical set of $\zeta$ consisting of local minima or maxima. Then for all neighborhoods $U$ of $L$, there exist $\epsilon_0>0$ and a neighborhood $U_1\subset U$ of $L$ such that, for all $\epsilon\in(0,\epsilon_0)$, all $(g,\epsilon^{-1}b_1)$-geodesics with starting point in $U_1$ remain in $U$ for all times.
\end{cor}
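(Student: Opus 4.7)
The plan is to approximate $L$ by non-resonant regular circles of $\zeta$ on both sides and then sandwich $L$ between two trapping regions produced by Theorem~\ref{t:kam}.

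Without loss of generality, assume $L$ consists of local minima of $\zeta$ and set $c_* := \zeta|_L$; the maximum case is symmetric. Shrinking $U$ if needed, we may assume $U$ is a tubular neighborhood of $L$, that $U\setminus L = U^+\sqcup U^-$ is the splitting into the two sides of the two-sided circle $L$, and that $\zeta > c_*$ on $U\setminus L$. For each small $c>0$, $\{\zeta=c_*+c\}\cap U^\pm$ is an embedded circle $L_c^\pm$ shrinking onto $L$ as $c\to 0^+$.

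Step 1: I would first show that non-resonant circles $L_c^\pm$ accumulate on $L$ from both sides. In Darboux coordinates $(s,t)\in\T\times(-\delta,\delta)$ for $\mu$ with $L=\{t=0\}$, write $\zeta=c_*+f(s,t)$ with $f\geq 0$, $f(s,0)=0$ and $\partial_t f(s,0)=0$. The action along $L_c^+$ is
\[
I^+(c)=\int_0^{2\pi}t^+(s,c)\,ds,\qquad f(s,t^+(s,c))=c,
\]
and I will denote $I_L := \lim_{c\to 0^+} I^+(c)$. The vanishing of $\partial_t f$ on $L$ forces $dI^+/dc\to+\infty$ as $c\to 0^+$, hence $\tilde\zeta'(I)\to 0$ as $I\to I_L^+$. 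If $\tilde\zeta$ were affine on an interval $(I_L, I_L+\delta)$, its slope would be zero, contradicting $\tilde\zeta>c_*$ there. Thus $\tilde\zeta''\not\equiv 0$ on any interval adjacent to $I_L$, so non-resonant $L_c^+$ accumulate on $L$; the argument on the $-$ side is identical.

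Step 2: the sandwich. I would fix non-resonant circles $L^\pm\subset U^\pm$ close to $L$ and apply Theorem~\ref{t:kam} to each. The geometric content of its proof, supplied by Moser's twist theorem, is that for all sufficiently small $\epsilon$ there exist invariant curves $C_\epsilon^\pm$ for the magnetic flow in $M$ arbitrarily close to $L^\pm$. Choosing $C_\epsilon^\pm$ between $L$ and $L^\pm$, the annular region $U_1\subset U$ bounded by $C_\epsilon^+$ and $C_\epsilon^-$ and containing $L$ is invariant under the flow, so every $(g,\epsilon^{-1}b_1)$-geodesic starting in $U_1$ remains in $U_1\subset U$ for all times. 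The main obstacle I anticipate is that Theorem~\ref{t:kam} as stated records only the trapping property, not the existence of the barrier curves $C_\epsilon^\pm$; these must be extracted from the KAM/Moser step inside its proof, or folded into a strengthened version of its statement.
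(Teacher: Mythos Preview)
Your approach is essentially the paper's: find non-resonant circles on both sides of $L$ and sandwich $L$ between the resulting KAM barriers. Your Step~1 is a reformulation of Lemma~\ref{l:mincircle}; the paper proves $\frac{\diff\tilde I}{\diff c}\to+\infty$ directly via the integral bound \eqref{e:ds}, while you pass to the inverse function and argue $\tilde\zeta'\to 0$, but the content is the same.

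Two remarks on Step~2. First, the invariant objects produced by Moser's theorem are two-dimensional tori in $SU'$ (equivalently in $(I_-,I_+)\times\T^2$), not curves in $M$; their role as barriers comes from the fact that a $2$-torus separates a $3$-manifold, so your phrase ``invariant curves $C_\epsilon^\pm$ for the magnetic flow in $M$'' should be corrected. Second, you are right that Theorem~\ref{t:kam} as stated does not give you the barrier tori, and the paper does exactly what you anticipate: it bypasses Theorem~\ref{t:kam}, applies the normal form of Theorem~\ref{t:nf} once to the whole tubular neighborhood, invokes Lemma~\ref{l:mincircle} to get non-resonant circles $L_\pm$ on either side, and then applies Moser's twist theorem (via Remark~\ref{r:moser}) to $L_-$ and $L_+$ simultaneously to obtain the invariant tori bounding a neighborhood of $L$ in $SU'$. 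A final shrinking of $\epsilon_0$ ensures $\Psi_\epsilon$ carries this trapping back to the magnetic flow inside $U$.
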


We now want to give some criteria for the existence of non-resonant circles. One classical idea is to look for such circles around a non-degenerate local minimum (maximum) point $q_*\in M$ of $\zeta$ by means of the Birkhoff normal form. Birkhoff normal form tells us that $\zeta$ can be written in a suitable Darboux chart centered at $q_*$ as
\[
\zeta(q)=\zeta(q_*)\pm\tfrac12\sqrt{\det \mathrm{Hess}\, \zeta(q_*)} \, r^2+a(q_*) r^4+o(r^4),
\]
where $r$ is the radial coordinate and $a(q_*)\in \R$ is a real number. Here $\det \mathrm{Hess}\, \zeta\, (q_*)$ is the determinant of the Hessian of $\zeta$ in Darboux coordinates with respect to $\mu$, which coincides with the determinant of the Hessian with respect to $g$.
Since $I=\tfrac12r^2$ is the action variable in Darboux coordinates, we see that
\begin{equation}\label{e:limI}
\lim_{I\to 0}\frac{\diff^2\tilde \zeta}{\diff I^2}\neq 0,
\end{equation}
as soon as $a(q_*)\neq0$. In this case, small circles $L_c$ around $q_*$ will be non-resonant. We mention here an equivalent way to check \eqref{e:limI} using coordinates $(x,y)$ around $q_*$ where $\zeta=\zeta(q_*)+\tfrac12r^2$, which exist thanks to the Morse lemma. Let $\rho$ be the unique function satisfying $\mu=\rho \diff x\wedge\diff y$. We will see in Lemma \ref{l:minpoint} that \eqref{e:limI} holds if and only if the Laplacian $\Delta \rho(q_*)$ of $\rho$ at $q_*$ in the $(x,y)$-coordinates does not vanish.  

On the other hand, when the local minimum (maximum) point $q_*$ is isolated but degenerate, we will see that the existence of non-resonant circles accumulating at $q_*$ automatically follows.
\begin{cor}\label{c:minpoint} 
Let $q_*\in M$ be an isolated local minimum (maximum) point for $\zeta$ which is either degenerate or is non-degenerate and satisfies $a(q_*)\neq0$, equivalently $\Delta\rho(q_*)\neq0$. Then for all neighborhoods $U$ of $q_*$, there exist $\epsilon_0>0$ and a neighborhood $U_1\subset U$ of $q_*$ such that, for all $\epsilon\in(0,\epsilon_0)$, all $(g,\epsilon^{-1}b_1)$-geodesics with initial starting point in $U_1$ remain in $U$ for all times.
\end{cor}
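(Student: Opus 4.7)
The plan is to reduce Corollary \ref{c:minpoint} to Theorem \ref{t:kam} by producing non-resonant level circles $L_{c_n}$ of $\zeta$ shrinking down to $q_*$, and then to observe that the KAM invariant tori supplied by the proof of that theorem encircle $q_*$ and thereby bound invariant disks in $M$ arbitrarily close to $q_*$.

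In the non-degenerate case I would invoke the Birkhoff expansion recalled in the preamble: substituting the action coordinate $I=\tfrac12 r^2$ gives
\[
\tilde\zeta(I)=\zeta(q_*)\pm\sqrt{\det\mathrm{Hess}\,\zeta(q_*)}\,I+4a(q_*)\,I^2+o(I^2),
\]
so $\tilde\zeta''(I)\to 8\, a(q_*)\neq 0$ as $I\searrow 0$ and every sufficiently small level circle of $\zeta$ around $q_*$ satisfies \eqref{e:nr}. In the degenerate case the analogous non-resonance assertion is the main technical point, and I would prove it by contradiction: if no non-resonant circle accumulated at $q_*$, then $\tilde\zeta''\equiv 0$ on some $(0,I_0)$, i.e.\ the $\mu$-area function $A(c):=\mathrm{area}_\mu\{\zeta\le c\}$ would be affine in $c$ near $c_*:=\zeta(q_*)$, and by the coarea formula $A'(c)=\int_{L_c}|\nabla\zeta|^{-1}\,d\ell$ would stay bounded as $c\searrow c_*$. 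Since $\mathrm{Hess}\,\zeta(q_*)$ is degenerate, $|\nabla\zeta|$ vanishes at $q_*$ faster than first order; a Łojasiewicz-type lower bound $\zeta-c_*\ge C\dist(\cdot,q_*)^\alpha$ with $\alpha>2$ then forces $A'(c)\to\infty$, contradicting boundedness. Making this Łojasiewicz inequality rigorous for a general smooth $\zeta$ with an isolated degenerate extremum on a surface is the delicate step; if needed one can instead argue via a local normal form for $\zeta$ around $q_*$ in dimension two.

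With a sequence of non-resonant circles $L_{c_n}$ shrinking to $q_*$ at hand, the trapping assertion is assembled as follows. Given a neighborhood $U$ of $q_*$, fix $n$ so large that the closed disk $D_n\subset M$ bounded by $L_{c_n}$ and containing $q_*$ lies inside $U$, together with a thin annular tube $V_n\subset U$ around $L_{c_n}$. Theorem \ref{t:kam} applied to $L_{c_n}$ with ambient set $V_n$ yields some $\epsilon_0>0$ and, via the Moser twist theorem underlying its proof, a Cantor family of invariant curves for the associated reduced return map accumulating on $L_{c_n}$ from either side. Picking one such invariant curve $C_-$ on the $q_*$-side of $L_{c_n}$ and lifting it to $SU'$ produces an invariant two-torus $T_-\subset SM$ separating a neighborhood of $\pi^{-1}(L_{c_n})$ into two components. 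The component containing $\pi^{-1}(q_*)$ is an invariant topological solid torus whose projection to $M$ is a disk $U_1\subset D_n$ around $q_*$; any $(g,\epsilon^{-1}b_1)$-geodesic starting in $U_1$ is therefore trapped in $D_n\subset U$ for all times.
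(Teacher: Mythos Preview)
Your overall architecture—produce non-resonant circles $L_{c_n}\to q_*$ and then use one KAM torus on the $q_*$-side to bound an invariant disk—is exactly what the paper does, and your treatment of the non-degenerate case via the Birkhoff expansion as well as the final trapping step match the paper's argument.

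The degenerate case, however, has a genuine gap, and the direction of your estimate is inverted. First, the sentence ``since $\mathrm{Hess}\,\zeta(q_*)$ is degenerate, $|\nabla\zeta|$ vanishes at $q_*$ faster than first order'' is false as stated: for $\zeta=x^2+y^4$ the Hessian is degenerate but $|\nabla\zeta|\sim 2|x|$ along the $x$-axis. Second, a \L ojasiewicz lower bound $\zeta-c_*\ge C\,r^\alpha$ need not hold for smooth (non-analytic) $\zeta$, as you yourself note; but more to the point, a \emph{lower} bound on $\zeta-c_*$ gives an \emph{upper} bound on the sublevel set $\{\zeta\le c\}$, which goes the wrong way for showing $A'(c)\to\infty$. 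What you actually need is an \emph{upper} bound on $\zeta-c_*$, and this is elementary: since $q_*$ is a local minimum with degenerate Hessian on a surface, $\mathrm{Hess}\,\zeta(q_*)$ has rank at most one, so in suitable Darboux coordinates $\zeta-c_*=\tfrac12(ux)^2+o(r^2)\le \tfrac12(ux)^2+\tfrac12(Cr)^3$ for some $u\ge 0$ and $C>0$.

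This is precisely the paper's route (Lemma~\ref{l:minpoint}): from that Taylor upper bound one reads off an explicit rectangle (if $u\neq 0$) or disk (if $u=0$) contained in $\{\zeta\le c\}$ whose $\mu$-area, divided by $c-c_*$, diverges as $c\to c_*$. By the mean value theorem this forces $\tfrac{d\tilde I}{dc}\to\infty$, hence (Remark~\ref{r:idc}) non-resonant circles accumulate at $q_*$. No \L ojasiewicz inequality, gradient estimate, or normal-form theory beyond the second-order Taylor expansion is needed.
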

We now present two other situations in which non-resonant circles can be found. The first one uses a saddle point for $\zeta$.
\begin{thm}\label{t:saddle}
Suppose that $\zeta$ has a non-degenerate saddle critical point $q_*\in M$ such that there are no critical values of $\zeta$ in the interval $(\zeta(q_*),\zeta(q_*)+\delta)$ for some $\delta>0$. Then there exists a sequence of embedded circles $L_{c_n}$ such that $L_{c_n}$ is non-resonant and
\[
\lim_{n\to\infty}\mathrm{dist}(q_*,L_{c_n})=0.
\]
\end{thm}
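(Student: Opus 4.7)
The plan is to reduce the non-resonance condition \eqref{e:nr} to non-triviality of the period of the Hamiltonian flow of $\zeta$ (with respect to $\mu$) along the circles $L_c$, and then to exploit the logarithmic divergence of this period as $c\to \zeta(q_*)^+$.

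First I would construct the candidate circles. By the Morse lemma there are coordinates $(x,y)$ near $q_*$ with $\zeta-\zeta(q_*)=xy$, so for $c>\zeta(q_*)$ small the level set $\{\zeta=c\}$ consists locally of two hyperbolic branches at distance $\sim \sqrt{c-\zeta(q_*)}$ from $q_*$. Since by hypothesis there are no critical values of $\zeta$ in $(\zeta(q_*),\zeta(q_*)+\delta)$, each such branch continues globally, via the flow of a pseudo-gradient, to a smoothly varying connected component $L_c$ of $\{\zeta=c\}$ which is an embedded circle, and $\dist(q_*,L_c)\to 0$ as $c\to\zeta(q_*)^+$.

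Next I would translate non-resonance into a statement about the period function. Let $T(c)$ be the period of the Hamiltonian flow of $\zeta$ on $L_c$ with respect to $\mu$, and let $(I,\varphi)\in(I_-,I_+)\times\T$ be the action-angle coordinates around $L_{c_0}$ fixed in the introduction. Since $\tilde\zeta'(I_0)$ is the angular frequency of this flow, $\tilde\zeta'(I_0)=2\pi/T(c_0)$; using $\diff I/\diff c = T(c)/(2\pi)$, a further differentiation gives
\[
\frac{\diff^{2}\tilde\zeta}{\diff I^{2}}(I_0)\;=\;-\frac{(2\pi)^{2}\,T'(c_0)}{T(c_0)^{3}},
\]
so the non-resonance condition \eqref{e:nr} at $L_{c_0}$ is equivalent to $T'(c_0)\neq 0$.

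The analytic heart of the argument is the divergence $T(c)\to+\infty$ as $c\to\zeta(q_*)^+$. Writing $\mu=\rho(x,y)\,\diff x\wedge\diff y$ in the Morse chart, one has $X_\zeta=\rho(q_*)^{-1}(x\partial_x-y\partial_y)+O(|(x,y)|^{2})$ near $q_*$; integrating along the branch $\{xy=c-\zeta(q_*)\}$ through a fixed small box shows that the transit time there is comparable to $\rho(q_*)\log\bigl(1/(c-\zeta(q_*))\bigr)$, while the remaining portion of $L_c$ contributes a uniformly bounded transit time. Once this is established, the theorem follows at once: if $T'$ vanished identically on some interval $(\zeta(q_*),\zeta(q_*)+\delta')$, then $T$ would be constant there, contradicting the divergence, so there is a sequence $c_n\to\zeta(q_*)^+$ with $T'(c_n)\neq 0$, and by the previous paragraph the circles $L_{c_n}$ are non-resonant. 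The main obstacle is controlling the nonlinear corrections to $X_\zeta$ uniformly as $c\to\zeta(q_*)^+$; this is handled by a direct estimate of the transit time in the Morse chart, where the nonlinear perturbation only contributes a bounded additive correction to the dominant logarithmic term.
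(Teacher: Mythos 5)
Your proof is correct and follows essentially the same route as the paper: the paper works directly with $\tilde I'(c)=\int_{L_c}\eta$ (which is, up to a factor of $2\pi$, your period $T(c)$), shows via the Morse chart that this diverges logarithmically as $c\to\zeta(q_*)^+$, and then invokes the elementary observation (Remark \ref{r:idc}) that a function whose derivative blows up at an endpoint cannot have vanishing second derivative on a whole interval ending there. Your reformulation through $T(c)$ and the identity $\tilde\zeta''(I_0)=-(2\pi)^2\,T'(c_0)/T(c_0)^3$ is a harmless repackaging of the same non-resonance criterion, and your transit-time estimate near the hyperbolic branch is the same logarithmic lower bound the paper obtains from $\diff\tilde I/\diff c\geq\delta\int_{L_c}|\nabla H_1|^{-1}\diff s$.
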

The second one works when the closed, oriented surface $M$ is the two-sphere and the function $\zeta$ has exactly one minimum and one maximum point which, by Corollary \ref{c:minpoint}, can be both assumed to be non-degenerate.
\begin{thm}\label{t:S2}
Suppose that $M=S^2$ and that $\zeta$ has one non-degenerate minimum at $q_{\min}\in S^2$, one non-degenerate maximum at $q_{\max}\in S^2$ and no other critical point. If
\[
\sqrt{\det \mathrm{Hess}\, \zeta (q_{\min})}\neq\sqrt{\det \mathrm{Hess} \, \zeta(q_{\max})},
\]
then there is a non-resonant circle $L_c$. Here the Hessian is taken with respect to the metric $g$.
\end{thm}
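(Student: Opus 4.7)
The plan is to translate the non-resonance condition into a statement about the area enclosed by the level circles of $\zeta$, and then to exploit the assumption on the two Hessian determinants via a Morse-lemma computation at each of the two critical points.

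Since $M=S^2$ and $\zeta$ is a Morse function with exactly one minimum and one maximum, every level set $L_c:=\{\zeta=c\}$ for $c\in(\zeta(q_{\min}),\zeta(q_{\max}))$ is a single embedded circle bounding a topological disk $D_c:=\{\zeta\le c\}$. Set
\[
A(c):=\int_{D_c}\mu,
\]
which is smooth and strictly increasing on $(\zeta(q_{\min}),\zeta(q_{\max}))$. Since each $L_c$ bounds the disk $D_c$, Stokes' theorem gives that the action variable associated with $L_{c_0}$ with respect to $\mu$ is (up to an additive constant) $I=A(c)/(2\pi)$. Consequently $\tilde\zeta$ is the inverse function of $c\mapsto A(c)/(2\pi)$, and a short computation yields
\[
\frac{\diff^2\tilde\zeta}{\diff I^2}(I_0)=-\frac{(2\pi)^2\,A''(c_0)}{A'(c_0)^3}.
\]
Hence $L_{c_0}$ is non-resonant in the sense of \eqref{e:nr} if and only if $A''(c_0)\ne 0$.

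Next I will compute the boundary limits of $A'$. Fix Darboux coordinates $(x,y)$ at $q_{\min}$, so that $\mu=\diff x\wedge \diff y$ near $q_{\min}$, and apply the Morse lemma to $\zeta-\zeta(q_{\min})$ in these coordinates. The sublevel set $\{\zeta\le \zeta(q_{\min})+s\}$ is, to leading order, the ellipse $\{\tfrac12 H_{\min}(x,x)\le s\}$, where $H_{\min}$ denotes the Hessian of $\zeta$ at $q_{\min}$ in the $(x,y)$-coordinates. The Euclidean area of this ellipse equals $2\pi s/\sqrt{\det H_{\min}}$, and controlling the cubic Morse remainder gives
\[
A(c)=\frac{2\pi\bigl(c-\zeta(q_{\min})\bigr)}{\sqrt{\det H_{\min}}}+O\!\bigl((c-\zeta(q_{\min}))^2\bigr)\qquad\text{as }c\to\zeta(q_{\min})^+.
\]
Using the identification between the determinant of the Hessian in Darboux coordinates and that taken with respect to $g$ (recalled just before Corollary \ref{c:minpoint}), we obtain
\[
\lim_{c\to\zeta(q_{\min})^+}A'(c)=\frac{2\pi}{\sqrt{\det\mathrm{Hess}\,\zeta(q_{\min})}}.
\]
The symmetric argument near $q_{\max}$, applied to $\zeta_{\max}-\zeta$ and its sublevel $\{\zeta\ge c\}=S^2\setminus D_c$, whose area is $A_{\mathrm{tot}}-A(c)$, gives
\[
\lim_{c\to\zeta(q_{\max})^-}A'(c)=\frac{2\pi}{\sqrt{\det\mathrm{Hess}\,\zeta(q_{\max})}}.
\]

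The assumption $\sqrt{\det\mathrm{Hess}\,\zeta(q_{\min})}\ne\sqrt{\det\mathrm{Hess}\,\zeta(q_{\max})}$ then means that $A'$ takes two different boundary limits. In particular $A'$ is not constant on $(\zeta(q_{\min}),\zeta(q_{\max}))$, so the smooth function $A''$ cannot vanish identically there, and there exists $c\in(\zeta(q_{\min}),\zeta(q_{\max}))$ with $A''(c)\ne 0$. By the first paragraph, the corresponding $L_c$ is non-resonant, which is the desired conclusion.

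I expect the main obstacle to lie in the computation of the asymptotics of $A(c)$ at the two critical values: verifying that the leading constant is exactly $2\pi/\sqrt{\det\mathrm{Hess}\,\zeta(q_*)}$ (with the right normalization linking the Hessian in Darboux coordinates and with respect to $g$) and bounding the remainder carefully enough to be sure that the limit of $A'(c)$ equals this constant, rather than just $A(c)/(c-\zeta(q_*))$. Once this is done, the rest of the argument is a clean consequence of the area-action identification and the intermediate-value style observation on $A''$.
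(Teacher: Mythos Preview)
Your proposal is correct and follows essentially the same route as the paper: identify non-resonance with nonvanishing of the second derivative of the enclosed area $A(c)=\tilde I(c)$, compute the limits of $A'(c)$ at the two critical values via the Morse lemma to get $2\pi/\sqrt{\det\mathrm{Hess}\,\zeta(q_*)}$, and conclude from the inequality of Hessian determinants that $A'$ is non-constant, hence $A''$ is nonzero somewhere. The only difference worth noting is how the limit of $A'$ is obtained: the paper (Lemma~\ref{l:minpoint}) computes $A'(c)=\int_{L_c}\iota_{\nabla H_1/|\nabla H_1|^2}\omega$ directly and passes to the limit in this integral, which cleanly sidesteps the issue you flag at the end about distinguishing $\lim A'(c)$ from $\lim A(c)/(c-c_0)$.
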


\subsection{Application II: Rigidity and flexibility of Zoll systems}\label{ss:Zoll}
\mbox{}\medskip

The only known Zoll pairs $(g,b)$, which are different from the trivial examples $(g_\con,b_\con)$, or the purely Riemannian examples with $b=0$ on $S^2$, were constructed in \cite{Asselle:2019a}. These exotic Zoll pairs are defined on $\T^2=\T\times\T$ with angular coordinates $(x,y)$. Their metric $g$ is flat and their magnetic function $b$ depends on the $x$-variable only.

Unlike the trivial and the purely Riemannian examples, such Zoll pairs do not remain Zoll if we rescale $b$ by an arbitrary constant $r>0$. Therefore, we would like to understand, if there are examples of magnetic systems $(g,b)$ such that $(g,r^{-1}b)$ is Zoll for different values of $r$ or, more generally, for values of $r$ belonging to some given set. This corresponds to asking that the Hamiltonian flow of $H_g$ on the twisted tangent bundle $(TM,\omega_{(g,b)})$ is Zoll at several energy levels $\tfrac12r^2$. To better handle this question, we introduce the following definition.

\begin{dfn}
Let $\mathcal M$ be a family of magnetic systems on a closed, oriented surface $M$ and let $R$ be a subset of $(0,\infty)$. We say that $\mathcal M$ is \textit{Zoll-rigid at} $R$ provided the following holds: If $(g,b)\in\mathcal M$ is a magnetic system such that $(g,r^{-1} b)$ is Zoll
for every $r \in R$, then $g$ has constant curvature and $b$ is constant, 
or $M=S^2$, $g$ is a Zoll metric and $b$ is identically zero. If $\mathcal M$ is not Zoll-rigid at $R$, we say that $\mathcal M$ is \textit{Zoll-flexible at} $R$.
\end{dfn}

In \cite{Asselle:2019b}, the first rigidity phenomena were discovered:
\begin{itemize}
\item[i)] Magnetic systems on $\T^2$ are Zoll-rigid at any set $R$ which accumulates to zero and infinity;
\item[ii)] Magnetic systems on surfaces with genus at least two and with Ma\~n\'e critical value of the universal cover equal to $c$ are Zoll-rigid at any $R\subset (0,\sqrt{2c})$ which accumulates to $\sqrt{2c}$. 
\end{itemize}

In this paper we push the study of rigidity further and prove the following statement. 
\begin{thm}\label{t:Zoll1}
	Magnetic systems on a closed, oriented surface $M$ are Zoll-rigid at any set $R$ which accumulates to zero. Namely, if $(g,b_1)$ is a magnetic system on $M$ such that $(g,\epsilon^{-1}_nb_1)$ is Zoll for some sequence $\epsilon_n\to 0$, then: Either $b_1=0$, $M=S^2$ and $g$ is a Zoll metric, or $b_1$ is a non-zero constant and $g$ is a metric of constant curvature. 
\end{thm}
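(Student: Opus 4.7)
The plan is to apply the Hamiltonian normal form of Theorem \ref{t:nf} and read off pointwise constraints on $(g,b_1)$ from the forced triviality of the base return map along the sequence $\epsilon_n\to 0$. First dispatch the trivial case $b_1\equiv 0$: the magnetic flow reduces to the $\epsilon$-independent geodesic flow of $g$, and the Zoll hypothesis becomes the classical Zoll condition on $g$. Since a closed oriented surface admits a Zoll metric only if it is $S^2$, this yields the first alternative of the theorem.

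Assume now $b_1\not\equiv 0$. Pick any $q_0\in M$ with $b_1(q_0)\neq 0$, a contractible neighborhood $U$ of $q_0$ on which $b_1$ is nowhere zero (so that $SU\to U$ admits a section $W$), and relatively compact open sets $U''\Subset U'\Subset U$. Theorem \ref{t:nf} provides embeddings $\Psi_\epsilon:SU'\to SU$ conjugating $\omega_\epsilon|_{SM}$ to $\diff(H_\epsilon\diff\theta)-\pi^*(b_1\mu)$ with $H_\epsilon=-\epsilon^2/(2\,b_1\circ\pi)+o(\epsilon^2)$, thereby identifying the $(g,\epsilon^{-1}b_1)$-flow on $\Psi_\epsilon(SU')$ with that of $X_{H_\epsilon}=\tilde X_{H_{\epsilon,\theta}}+\partial_\theta$ up to reparametrization. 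Under the Zoll hypothesis, each orbit of $X_{H_\epsilon}$ closes at $\theta$-advance $2\pi k_\epsilon$ for some positive integer $k_\epsilon$; since the Zoll period $T_\epsilon$ is of order $\epsilon$ (being bounded above by the length of any closed near-Larmor orbit), one checks that $k_\epsilon=1$ for every sufficiently small $\epsilon_n$. Consequently the base return map $\varphi_{\epsilon_n}^{0,2\pi}:U''\to U''$ equals the identity for small $\epsilon_n$. Writing $X_{H_{\epsilon,\theta}}=\epsilon^2 X_0+o(\epsilon^2)$ with $X_0$ the Hamiltonian vector field of $-1/(2b_1)$ with respect to the symplectic form $b_1\mu$ on $U''$, a Duhamel expansion yields
\[
\varphi_\epsilon^{0,2\pi}(q)-q=2\pi\,\epsilon^2\,X_0(q)+o(\epsilon^2).
\]
Setting this to zero along $\epsilon=\epsilon_n$ and dividing by $\epsilon_n^2$ forces $X_0\equiv 0$ on $U''$; since $b_1\mu$ is symplectic, this is equivalent to $b_1$ being locally constant. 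Varying $q_0$ shows $b_1$ is locally constant on the open set $\{b_1\neq 0\}$, and a continuity argument across $\partial\{b_1\neq 0\}$ then rules out $b_1^{-1}(0)\neq\emptyset$, so $b_1$ is a nonzero constant on all of $M$.

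With $b_1$ a nonzero constant, apply the sharper expansion \eqref{e:H2}: the leading summand $-\epsilon^2/(2b_1)$ is now a dynamically inert additive constant, and the next term $-\epsilon^4 K\circ\pi/(2b_1)^3$ yields $X_{H_{\epsilon,\theta}}=\epsilon^4 Y_0+o(\epsilon^4)$, where $Y_0$ is the Hamiltonian vector field of $-K/(2b_1)^3$ with respect to $b_1\mu$. The same identity $\varphi_\epsilon^{0,2\pi}=\mathrm{id}$, after dividing by $\epsilon_n^4$, forces $Y_0\equiv 0$ on $U''$, so $K$ is locally constant; covering $M$ by such charts upgrades this to $K$ globally constant, establishing the second alternative.

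The main obstacle I expect lies in the two ingredients required to conclude $\varphi_{\epsilon_n}^{0,2\pi}=\mathrm{id}$: an a priori upper bound $T_\epsilon=O(\epsilon)$ on the Zoll period (implying the covering integer $k_\epsilon$ equals $1$ for small $\epsilon$), and a Duhamel expansion of the non-autonomous time-$2\pi$ base flow compatible with the $o(\epsilon^2)$ (respectively $o(\epsilon^4)$) remainder in \eqref{e:H1} (respectively \eqref{e:H2}). The first point is plausible because $T_\epsilon$ is bounded above by the length of any individual closed orbit and near-Larmor loops provide closed orbits of length $O(\epsilon)$; with these two ingredients in place, the remainder of the argument is a clean unwinding of the normal form.
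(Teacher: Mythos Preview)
Your overall strategy---extract $\varphi_\epsilon^{0,2\pi}=\mathrm{id}$ from the Zoll hypothesis and then Taylor-expand via the normal form to force $\diff b_1=0$ (and at the next order $\diff K=0$)---is exactly the paper's, and your Duhamel step is correct. The gap you flag, however, is real and is precisely where the paper does the work. The heuristic ``$T_\epsilon=O(\epsilon)$ because near-Larmor loops provide closed orbits of length $O(\epsilon)$'' is circular: near-Larmor loops in general drift and do \emph{not} close after one $\theta$-revolution, so the existence of a genuinely closed orbit with $\theta$-winding $2\pi$ is a nontrivial statement. The paper supplies it by Ginzburg's variational argument (Lemma~\ref{l:gin}): the action functional $\mathbb A_\epsilon$ on $U_3$ has an interior maximum, and its critical points are exactly the points of $Q_\epsilon:=\{q:\varphi^{0,2\pi}_\epsilon(q)=q\}$. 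Even once $Q_\epsilon\neq\varnothing$, concluding $Q_\epsilon=U_3$ (your $\varphi^{0,2\pi}_\epsilon=\mathrm{id}$) requires the separate continuity argument of Lemma~\ref{l:U3equalQ}: under the Zoll hypothesis the map $q\mapsto\Delta\theta(\Psi_\epsilon(W(q)),\ell(q))$ is continuous and $2\pi\Z$-valued, hence locally constant, so $Q_\epsilon$ is open; together with closedness and the connectedness of $U_3$ this gives $Q_\epsilon=U_3$. Your proposed bound $T_\epsilon=O(\epsilon)$ alone does not pin down $k_\epsilon=1$ without this topological step.

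There is also a domain issue. You work on an arbitrary small chart near a point with $b_1\neq 0$, but Ginzburg's argument needs an \emph{interior} extremum of $\mathbb A_\epsilon\sim -\epsilon^2\pi b_1^{-1}$, which is why the paper takes $U_3$ to be a connected component of a superlevel set $\{b_1>\delta_3\}$ meeting $\{b_1=\max b_1\}$ (and in the second step a superlevel set of $K$). A generic local chart carries no such guarantee, and moreover you need the whole Zoll orbit through $\Psi_\epsilon(W(q))$ to remain in the chart before you can even speak of $k_\epsilon$; the paper's nested superlevel sets $U_1\supset U_2\supset U_3$ together with the drift estimate $|X_{H_{\epsilon,\theta}}|=O(\epsilon^2)$ are what make this containment hold for $\theta$-time $2\pi$. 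With these two ingredients imported, your expansion argument is equivalent to the paper's Lemma~\ref{l:U3minusQ}.
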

The proof of Theorem \ref{t:Zoll1} hinges on the normal form proved in Theorem \ref{t:nf}. Roughly speaking, if $b_1$ is not constant, we can suppose, up to changing the sign, that $b_1$ has a positive maximum and apply the normal form to the regions $U=U_1=\{b_1>\delta_1\}$ and $U'=U_2=\{b_1>\delta_2\}$ for some positive $\delta_1<\delta_2$ in the interval $(\min b,\max b)$. Using a theorem of Ginzburg \cite{Ginzburg:1987lq}, we can find $U_3$ such that there exists $q_0\in U_3$ with the following property: the flow line of $\Phi_{H_\epsilon}$ starting at $W(q_0)$ has period $2\pi$. If now $(g,\epsilon^{-1}b_1)$ is Zoll, a topological argument shows that for all $q\in U_3$ the flow line of $\Phi_{H_\epsilon}$ starting at $W(q)$ has period $2\pi$. However, since $b_1$ is not constant, the normal form tells us that there must be $q_1\in U_3$ such that the flow line of $\Phi_{H_\epsilon}$ starting at $W(q_1)$ drifts with speed of order $\epsilon^2$ along a regular level set of $b_1$ inside $U_3$. Hence this flow line cannot close up in time $2\pi$ if $\epsilon$ is small enough. Thus $(g,\epsilon^{-1}b_1)$ cannot be Zoll for $\epsilon$ small enough, when $b_1$ is not constant. If now $b_1$ is a non-zero constant but the Gaussian curvature $K$ of $g$ is non-constant, we can apply a similar argument to the regions $U_1=\{K>\delta_1\}$ and $U_2=\{K>\delta_2\}$ to some $\delta_1<\delta_2$ in the interval $(\min K,\max K)$ to get that $(g,\epsilon^{-1}b_1)$ cannot be Zoll for $\epsilon$ small.
\begin{rmk}
For the restricted case of rotationally invariant systems a slight variant of Theorem \ref{t:Zoll1} was proved by Kudryavtseva and Podlipaev in \cite{KP19}, where rigidity is established provided all orbits are periodic (a condition which is weaker than being Zoll) for every sufficiently small speed (whereas the Zoll condition in Theorem \ref{t:Zoll1} only holds on a sequence of speeds converging to zero). It is natural to expect that such a statement should hold also for general magnetic systems on surfaces using the techniques of the present paper. As observed in \cite{KP19}, results such Theorem \ref{t:Zoll1} can be thought as magnetic analogues of a classical result of Bertrand \cite{Ber} asserting that the only attracting central forces for which every bounded orbit is periodic are the harmonic and the gravitational one \cite[Section 8D]{Arnold:1978aq}.
\end{rmk}
An intriguing problem is to understand exactly which sets guarantee rigidity and which ones flexibility. The threshold between the two behaviors can be subtle as the next result shows. We first define the sets
\[
R_*:=\{\xi\in(0,\infty)\ |\ J_1(\xi)=0\},\quad \qquad R_{\N}:=\bigcup_{k\in\N}\tfrac{1}{k}R_*,
\]
where $J_1$ is the first Bessel function. The set $R_*$ is discrete, bounded away from zero, unbounded from above and asymptotic to the arithmetic progression $\{\frac{3}{4}\pi+2\pi k\ |\ k\in\N\}$. In particular, $R_{\N}$ is a countable dense subset of $(0,\infty)$.
\begin{thm}
Let $\mathcal M$ be the set of rotationally invariant magnetic systems on $\T^2$ with average $1$. Namely, $(g,b)\in \mathcal M$ if and only if $g=\diff x^2+a(x)^2\diff y^2$ and $b=b(x)$ for some functions $a,b:\T\to\R$ such that $\int_{\T}ab\diff x=\int_{\T}a\diff x$. 
The family $\mathcal M$ is
\begin{enumerate}[(a)]
\item Zoll-rigid at every set $R$ which is unbounded from above and not contained in $R_\N$;
\item Zoll-flexible at each of the sets $\tfrac{1}{k}R_*$, $k\in\N$.
\end{enumerate}
\label{t:Zoll2}
\end{thm}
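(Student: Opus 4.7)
The plan is to exploit the rotational symmetry to reduce to a one-dimensional Hamiltonian system and then extract the Zoll condition as a Bessel-valued Fourier identity. For $(g,b)\in\M$ with $g=\diff x^2+a(x)^2\diff y^2$, I would write the Hamiltonian flow of $H_g$ at energy $r^2/2$ on $T^*\T^2$ with its twisted symplectic form; the $y$-translation symmetry yields a conserved quantity $c=p_y+A(x)$, where $A'(x)=a(x)b(x)$, and each orbit oscillates in $x$ between turning points $x_\pm(c,r)$ defined by $(c-A)^2=a^2r^2$. The $y$-drift per $x$-period equals
\[
\Delta y(c,r)=2\int_{x_-}^{x_+}\frac{c-A(x)}{a(x)\sqrt{a(x)^2r^2-(c-A(x))^2}}\,\diff x.
\]
Because an orbit of the magnetic flow on $SM$ closes on $\T^2$ iff $\Delta y/(2\pi)\in\Q$, and $\Delta y$ is continuous in $c$, the Zoll condition for $(g,r^{-1}b)$ is exactly that $\Delta y(\,\cdot\,,r)$ be constant in $c$ (with value then necessarily in $2\pi\Q$).

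For part (a) in the flat case $a\equiv 1$, the normalization $\int_0^{2\pi}b\,\diff x=2\pi$ gives $A(x+2\pi)=A(x)+2\pi$, so $h(s):=1/b(A^{-1}(s))$ is smooth and $2\pi$-periodic with mean one. The substitutions $s=A(x)$ and $u=c-s$ convert $\Delta y$ into the convolution $\Delta y(c,r)=2\int_{-r}^r u\,h(c-u)/\sqrt{r^2-u^2}\,\diff u$, and the identity
\[
\int_{-r}^r\frac{u\,e^{-iku}}{\sqrt{r^2-u^2}}\,\diff u=-i\pi r\,J_1(kr)
\]
combined with the Fourier series $h(s)=\sum_k\hat h_k e^{iks}$ yields
\[
\Delta y(c,r)=-2\pi i r\sum_{k\ne 0}\hat h_k\,J_1(kr)\,e^{ikc}.
\]
Zoll at speed $r$ is thus equivalent to $\hat h_k\,J_1(kr)=0$ for every $k\ne 0$: if some $\hat h_{k_0}\ne 0$, then $R\subset\tfrac{1}{k_0}R_*\subset R_\N$, contradicting $R\not\subset R_\N$; hence all $\hat h_k$ with $k\ne 0$ vanish, so $h$, and therefore $b$, is constant, equal to one by the normalization. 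For the non-flat case, I would set $\tilde a(s):=a(A^{-1}(s))$ and $\tilde b(s):=b(A^{-1}(s))$ (both $2\pi\bar a$-periodic, $\bar a$ being the mean of $a$), expand $\Delta y(c,r)$ as a Fourier series in $c$, and argue via a large-$r$ asymptotic expansion that each nonzero-mode coefficient has Bessel-zero structure analogous to the flat case, so that the same $R\not\subset R_\N$ dichotomy forces the nonzero Fourier modes of both $\tilde a$ and $\tilde b$ to vanish, reducing everything to the trivial pair $(\diff x^2+\diff y^2,1)$.

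For part (b), I would construct the flexible examples explicitly in the flat subfamily. Fix $k_0\in\N$ and a sufficiently small $\epsilon>0$, and set $h(s):=1+2\epsilon\cos(k_0 s)$, which is smooth, positive, and has mean one. Let $H(s):=\int_0^s h(\sigma)\,\diff\sigma$, so $H\colon\R\to\R$ is a diffeomorphism with $H(s+2\pi)=H(s)+2\pi$; define $A:=H^{-1}$ and $b(x):=A'(x)=1/h(A(x))$, a positive $2\pi$-periodic function with $\int_0^{2\pi}b\,\diff x=A(2\pi)-A(0)=2\pi$. Then $(\diff x^2+\diff y^2,b)\in\M$, and the Fourier--Bessel formula applied to $\hat h_0=1$, $\hat h_{\pm k_0}=\epsilon$ and $\hat h_k=0$ otherwise gives
\[
\Delta y(c,r)=4\pi r\epsilon\,J_1(k_0 r)\sin(k_0 c),
\]
which is constant in $c$ precisely when $J_1(k_0 r)=0$, i.e.\ when $r\in\tfrac{1}{k_0}R_*$. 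Since $b$ is non-constant for $\epsilon\ne 0$, this one-parameter family of magnetic systems witnesses Zoll-flexibility at $\tfrac{1}{k_0}R_*$.

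The main obstacle I anticipate is the non-flat case of part (a): when $\tilde a$ is non-constant the turning points $x_\pm(c,r)$ become implicitly $c$-dependent through $\tilde a$, so the flat-case pure convolution structure is lost, and extracting a clean Bessel-zero condition for each Fourier-in-$c$ mode of $\Delta y(c,r)$ requires a careful asymptotic analysis of the $r$-dependence. The hope is that, at least to the precision needed to invoke the $R\not\subset R_\N$ hypothesis, the coefficient of the $k$-th Fourier mode factors as $J_1(k r)$ times a function of $r$ which does not vanish at the relevant $r\in R$, so that the mode-by-mode argument of the flat case carries over.
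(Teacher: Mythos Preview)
Your treatment of part (b) and of the flat case of part (a) is correct and matches the paper's argument essentially line for line: both derive the Fourier--Bessel identity $\widehat{\Delta}_r(k)\propto J_1(kr)\,\hat h_{-k}$ and read off the dichotomy from it. The explicit flexible examples you build (single nonzero Fourier mode of $h$) are the same as the paper's $v_k$.

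The genuine gap is the non-flat case of part (a). You acknowledge this as ``the main obstacle'' and offer only a hope that each Fourier-in-$c$ coefficient of $\Delta y(\,\cdot\,,r)$ still factors as $J_1(kr)$ times something nonvanishing. That hope is unsubstantiated: once $a$ is non-constant the turning points depend on $c$ through $\tilde a$, the integral is no longer a convolution, and there is no reason to expect a clean Bessel factorization mode by mode. More tellingly, your argument never makes essential use of the hypothesis that $R$ is \emph{unbounded from above}; in the flat case you only need $R\not\subset R_\N$, and your vague ``large-$r$ asymptotic'' remark in the non-flat case does not turn unboundedness into a usable constraint.

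The paper does not attempt any direct analysis of $\Delta y$ in the non-flat case. Instead it invokes an external result, \cite[Proposition~2.1]{Asselle:2019b}, which says that if a rotationally invariant magnetic system on $\T^2$ is Zoll for a set of speeds unbounded from above, then the metric is already flat. This is precisely where the unboundedness hypothesis enters, and it reduces part (a) entirely to the flat computation you have. To complete your proof you would need either to supply that reduction yourself or to carry out the non-flat Fourier analysis you sketch, and the latter looks substantially harder than the former.
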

\begin{rmk}
By \cite{Benedetti:2018c}, there are no rotationally invariant Zoll systems on $\T^2$ with zero average. Thus up to rescaling $b$ by a constant, there is no loss of generality in assuming that the average is $1$.
\end{rmk}
Theorem \ref{t:Zoll2} is rather surprising and represents the first instance where the structure of the space of Zoll 
magnetic systems is influenced by the topology of the surface. Indeed, since the Ma\~n\'e critical value of the universal cover for magnetic systems with non-zero average is infinite, we can 
reformulate Theorem \ref{t:Zoll2}, b) by saying that
magnetic systems on $\T^2$ with average $1$ are Zoll-flexible for a particular set accumulating at the Ma\~n\'e critical value of the universal cover. This is in sharp contrast with the rigidity for surfaces with genus at least two described in the item ii) above \cite{Asselle:2019b}. 

%

\subsection*{Structure of the paper.}
\mbox{}\medskip

In Section \ref{s:preliminaries} we present some classical facts about the geometry of surfaces that serve as preliminaries to the Hamiltonian normal form contained in Theorem \ref{t:nf} which will be proved in Section \ref{s:nf}. In Section \ref{s:kam} we establish the existence of trapping regions and prove Theorem \ref{t:kam} and its Corollaries \ref{c:mincircle} and \ref{c:minpoint}. The criteria contained in Theorem \ref{t:saddle} and Theorem \ref{t:S2} for the existence of non-resonating circles will also be discussed there. Section \ref{s:zoll1} deals with the proof of Theorem \ref{t:Zoll1} about the Zoll-rigidity of strong magnetic fields, while Section \ref{s:zoll2} shows Theorem \ref{t:Zoll2} about the rigidity versus flexibility behaviour for rotationally symmetric magnetic fields on the two-torus.  

\subsection*{Acknowledgments.}
\mbox{}\medskip

L.A.~is partially supported by the Deutsche Forschungsgemeinschaft (DFG,
German Research Foundation) under the DFG-grant AS 546/1-1 - 380257369 (Morse theoretical methods in Hamiltonian dynamics).  G.B.~is partially supported by the Deutsche Forschungsgemeinschaft (DFG,
German Research Foundation) under Germany's Excellence Strategy
EXC2181/1 - 390900948 (the Heidelberg STRUCTURES Excellence Cluster), under the Collaborative Research Center SFB/TRR 191 - 281071066 (Symplectic Structures in Geometry, Algebra and Dynamics) and under the Research Training Group RTG 2229 - 281869850 (Asymptotic Invariants and Limits of Groups and Spaces).

\section{Preliminaries from the differential geometry of surfaces}
\label{s:preliminaries}

In this section, we recall some facts from the Riemannian geometry of surfaces that will be useful later on. Let $U\subset M$ be an open set and assume that we have a section $W:U\to SU$ of $\pi:SU\to U$. We denote by $e^{i\theta}:SU\to SU$ the flow of fiberwise rotations and use the shorthand $(\cdot)^\perp:=e^{i\pi/2}(\cdot)$. We denote by $\partial_\theta$ the vector field generating the flow of rotations. Moreover, we denote by $\mathcal H:=\ker\diff\theta\subset T(SU)$ for the distribution tangent to the level sets of $\theta$. We have
\[
T(SU)=\mathcal H\oplus (\R\cdot\partial_\theta).
\]
If $u\in TU$, we denote by $\tilde u$ the unique element in $\mathcal H$ such that $\diff\pi[\tilde u]=u$. 
 
A key object in our computations will be the $1$-form $\alpha\in\Omega^1(U)$ given by
\begin{equation}\label{e:alpha}
\alpha=g(\nabla_{\cdot} W,W^\perp),
\end{equation}
where $\nabla$ is the Levi-Civita connection of $g$. The rotations commute with the connection, so that for every $\theta\in \T$ we have
\begin{equation}\label{e:alphatheta}
\alpha=g(\nabla_{\cdot} W,W^\perp)=g(\nabla_{\cdot} (e^{i\theta}W),(e^{i\theta}W)^\perp).
\end{equation}
 
Let $\hat\lambda$ be the pullback on $SM$ of $\lambda$. Our first task is to compute $\diff\hat\lambda$ on $SU$ in terms of $\alpha$.
\begin{lem}\label{l:dlambda}
There holds
\[
(\diff \hat\lambda)_v=(\pi^*\alpha+\diff\theta)\wedge g(\diff\pi[\,\cdot\,],v^\perp),\qquad\forall\,v\in SU.
\]
\end{lem}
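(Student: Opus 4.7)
The plan is to trivialize $SU \cong U \times \T$ via the map $(q,\theta) \mapsto e^{i\theta}W(q)$ and verify the identity by evaluating both sides on the two natural types of bilinear arguments: a pair $(\partial_\theta, \tilde u)$ and a pair $(\tilde u_1, \tilde u_2)$ of horizontal lifts. In this trivialization $\hat\lambda_{(q,\theta)}(\xi) = g_q(e^{i\theta}W(q), \diff\pi[\xi])$; in particular $\hat\lambda$ vanishes on $\partial_\theta$, and on a horizontal lift $\tilde u$ it equals $g_q(e^{i\theta}W(q), u)$. I will then apply Cartan's formula
\[
\diff\hat\lambda(Y_1, Y_2) = Y_1\hat\lambda(Y_2) - Y_2\hat\lambda(Y_1) - \hat\lambda([Y_1, Y_2])
\]
to each case, extending tangent vectors $u, u_1, u_2 \in T_qU$ to $\theta$-independent local vector fields on $U \times \T$.

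For the mixed pair $(\partial_\theta, \tilde u)$ one has $[\partial_\theta, \tilde u] = 0$ and $\hat\lambda(\partial_\theta) = 0$, so the Cartan formula reduces to $\partial_\theta\, g(e^{i\theta}W, u) = g((e^{i\theta}W)^\perp, u) = g(u, v^\perp)$. This matches the right-hand side of the claim since $\diff\theta(\partial_\theta) = 1$, $\pi^*\alpha(\partial_\theta) = 0$, and $g(\diff\pi[\partial_\theta], v^\perp) = 0$. For the horizontal pair $(\tilde u_1, \tilde u_2)$, metric compatibility of $\nabla$ gives
\[
\tilde u_1\, g(e^{i\theta}W, u_2) - \tilde u_2\, g(e^{i\theta}W, u_1) - g(e^{i\theta}W, [u_1, u_2]) = g(\nabla_{u_1}(e^{i\theta}W), u_2) - g(\nabla_{u_2}(e^{i\theta}W), u_1),
\]
because the torsion-free identity $[u_1,u_2] = \nabla_{u_1}u_2 - \nabla_{u_2}u_1$ cancels the remaining symmetric contribution. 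Since $|e^{i\theta}W|_g \equiv 1$, the derivative $\nabla_{u_i}(e^{i\theta}W)$ is $g$-orthogonal to $v = e^{i\theta}W$ and therefore a scalar multiple of $v^\perp$; by the rotation-invariance of $\alpha$ recorded in \eqref{e:alphatheta}, that scalar equals $\alpha(u_i)$. Plugging in yields
\[
\diff\hat\lambda(\tilde u_1, \tilde u_2) = \alpha(u_1)\,g(u_2, v^\perp) - \alpha(u_2)\,g(u_1, v^\perp),
\]
which is precisely $\bigl(\pi^*\alpha \wedge g(\diff\pi[\cdot], v^\perp)\bigr)(\tilde u_1, \tilde u_2)$, again matching the right-hand side since $\diff\theta$ annihilates horizontal lifts.

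Since $T(SU) = \mathcal H \oplus \R\,\partial_\theta$ and both sides of the claimed identity are $2$-forms, agreement on these two types of pairs suffices. The only potential obstacle is purely notational: keeping the horizontal-lift conventions straight and using \eqref{e:alphatheta} at the correct step to collapse $g(\nabla_{u_i}(e^{i\theta}W), (e^{i\theta}W)^\perp)$ into $\alpha(u_i)$, which is precisely the point where the rotation-equivariance of the connection form enters. Beyond this, the computation is a direct application of Cartan's formula and metric compatibility.
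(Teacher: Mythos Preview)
Your proof is correct and follows essentially the same approach as the paper: both trivialize $SU\cong U\times\T$, apply Cartan's formula on the pairs $(\partial_\theta,\tilde u)$ and $(\tilde u_1,\tilde u_2)$, and use metric compatibility together with \eqref{e:alphatheta}. The only cosmetic difference is that the paper assumes $[u_1,u_2]=0$ locally to drop the bracket term, whereas you keep it and cancel it via torsion-freeness of $\nabla$; the computations are otherwise identical.
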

\begin{proof}
Let $v=e^{i\theta}W(q)$ for some $\theta\in \T$ and $q\in U$. The formula for $(\diff\hat\lambda)_v$ has to be checked for the pairs $\tilde u_1,\tilde u_2\in \mathcal H_v$ and $\partial_\theta,\tilde u\in \mathcal H$. Let $u_i:=\diff\pi[\tilde u_i]$ for $i=1,2$. We can suppose without loss of generality that $u_1,u_2$ are defined in a neighborhood of $q$ and satisfy $[u_1,u_2]=0$ there. We compute first
\begin{align*}
\tilde u_1\big(g(u_2,e^{i\theta}W)\big)=g(\nabla_{u_1}u_2,v)+g(u_2,\nabla_{u_1}(e^{i\theta}W))&=g(\nabla_{u_1}u_2,v)+g(u_2,v^\perp)g(e^{i\theta}W^\perp,\nabla_{u_1}(e^{i\theta}W))\\
&=g(\nabla_{u_1}u_2,v)+g(u_2,v^\perp)g(W^\perp,\nabla_{u_1}W)\\
&=g(\nabla_{u_1}u_2,v)+g(u_2,v^\perp)\alpha(u_1),
\end{align*}
where in the second equality we used the fact that $v^\perp = e^{i\theta}W^\perp$ and the identity 
\[
g(\nabla_\cdot( e^{i\theta}W),e^{i\theta}W)=\tfrac12\diff|e^{i\theta}W|^2=\tfrac12\diff(1)=0,
\]
and Identity \eqref{e:alphatheta} in the third equality.

The value of $\diff\hat\lambda(\tilde u_1,\tilde u_2)$ is the antisymmetrization of the expression above since $[u_1,u_2]=0$. Taking into account that $\nabla$ is symmetric we obtain
\[
(\diff\hat\lambda)_v(\tilde u_1,\tilde u_2)=g(u_2,v^\perp)\alpha(u_1)-g(u_2,v^\perp)\alpha(u_1)=\pi^*\alpha\wedge g(\diff\pi[\,\cdot\,],v^\perp)(\tilde u_1,\tilde u_2).
\]
For the pair $\partial_\theta,\tilde u$ we similarly get
\[
(\diff\hat\lambda)_v(\partial_\theta,\tilde u)=\partial_\theta(g(u,e^{i\theta}W))-0=g(u,\partial_\theta(e^{i\theta}W))=g(u,e^{i\theta}W^\perp)=g(u,v^\perp)=\diff\theta\wedge g(\diff\pi[\,\cdot\,],v^\perp)(\partial_\theta,\tilde u).\qedhere
\]

\end{proof}

We now link the Gaussian curvature $K:U\to\R$ of $g$ to the function
\begin{equation}\label{e:f}
f:SU\to \R,\qquad f(v):=\alpha(v)^2+\diff\Big(\alpha\big(e^{i\theta(v)}W^\perp\big)\Big)\cdot v.
\end{equation}
\begin{lem}\label{l:K}
For every $v\in SU$ we have
\begin{equation}\label{e:K1}
K(\pi(v))=f(v)+f(v^\perp).
\end{equation}
In particular, there exists a function $a:SU\to \R$ such that
\begin{equation}\label{e:K2}
\frac12K\circ\pi=f+\partial_\theta a.
\end{equation}
\end{lem}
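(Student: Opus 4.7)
The plan is to compute $f(v)+f(v^\perp)$ directly, recognize the result as the two-form $\diff\alpha$ evaluated on the oriented orthonormal pair $(v,v^\perp)$, and invoke the Cartan structural equation, which identifies $\diff\alpha$ with $K\mu$ on the positively oriented frame. For the second claim I would exploit the $\pi/2$-antisymmetry of $h:=f-\tfrac12 K\circ\pi$ on each fiber to construct the primitive $a$ by fiberwise integration.

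Fix $q\in U$, $v=e^{i\theta}W(q)\in S_qU$, and write $V_\vartheta:=e^{i\vartheta}W$ for the rotated frame on $U$, so that $v=V_\theta(q)$, $v^\perp=V_{\theta+\pi/2}(q)$, and $e^{i\theta(v)}W^\perp=V_{\theta+\pi/2}$. Since $V_{\theta+\pi}=-V_\theta$ (and hence $\alpha(V_{\theta+\pi})=-\alpha(V_\theta)$), the definition \eqref{e:f} gives
\[
f(v)+f(v^\perp)=\alpha(V_\theta)^2+\alpha(V_{\theta+\pi/2})^2+V_\theta\bigl(\alpha(V_{\theta+\pi/2})\bigr)-V_{\theta+\pi/2}\bigl(\alpha(V_\theta)\bigr)
\]
at $q$. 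By the rotational invariance \eqref{e:alphatheta} one has $\nabla_X V_\vartheta=\alpha(X)V_{\vartheta+\pi/2}$ and, differentiating $g(V_\vartheta,V_{\vartheta+\pi/2})=0$, also $\nabla_X V_{\vartheta+\pi/2}=-\alpha(X)V_\vartheta$. Symmetry of $\nabla$ then yields $[V_\theta,V_{\theta+\pi/2}]=-\alpha(V_\theta)V_\theta-\alpha(V_{\theta+\pi/2})V_{\theta+\pi/2}$, so that $\alpha([V_\theta,V_{\theta+\pi/2}])=-\alpha(V_\theta)^2-\alpha(V_{\theta+\pi/2})^2$. Substituting into the Cartan formula $\diff\alpha(V_\theta,V_{\theta+\pi/2})=V_\theta\alpha(V_{\theta+\pi/2})-V_{\theta+\pi/2}\alpha(V_\theta)-\alpha([V_\theta,V_{\theta+\pi/2}])$ exactly cancels the quadratic terms, leaving $f(v)+f(v^\perp)=\diff\alpha(v,v^\perp)$. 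Since $\{V_\theta,V_{\theta+\pi/2}\}$ is a positively oriented orthonormal frame, the second structure equation identifies $\diff\alpha(v,v^\perp)$ with the Gaussian curvature $K(\pi(v))$, establishing \eqref{e:K1}.

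For \eqref{e:K2}, set $h:=f-\tfrac12 K\circ\pi$; by \eqref{e:K1} it satisfies $h(v)+h(v^\perp)=0$. Fixing $q\in U$ and writing $H(\vartheta):=h(e^{i\vartheta}W(q))$, this reads $H(\vartheta)+H(\vartheta+\pi/2)=0$, from which splitting $[0,2\pi]$ into its four arcs of length $\pi/2$ and using $\vartheta\mapsto\vartheta+\pi/2$ immediately gives $\int_0^{2\pi}H\,\diff\vartheta=0$. Hence the fiberwise antiderivative
\[
a\bigl(e^{i\vartheta}W(q)\bigr):=-\int_0^\vartheta h\bigl(e^{is}W(q)\bigr)\,\diff s
\]
is well defined (the full-period integral vanishes) and smooth on $SU$, with $\partial_\theta a=-h$, i.e.\ $\tfrac12 K\circ\pi=f+\partial_\theta a$. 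The main obstacle is the sign bookkeeping in the first step: keeping track of $V_{\theta+\pi}=-V_\theta$, the covariant derivatives of rotated frames, and the sign of $\alpha([V_\theta,V_{\theta+\pi/2}])$ is delicate, but once the cancellation identifies $f+f\circ(\cdot)^\perp$ with $\diff\alpha$, the curvature identity and the fiberwise integration go through routinely.
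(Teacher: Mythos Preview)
Your proof is correct and follows essentially the same approach as the paper: you derive the commutator $[V_\theta,V_{\theta+\pi/2}]=-\alpha(V_\theta)V_\theta-\alpha(V_{\theta+\pi/2})V_{\theta+\pi/2}$, combine it with the Cartan formula for $\diff\alpha$ and the structural equation $K\mu=\diff\alpha$ to obtain \eqref{e:K1}, and then integrate fiberwise to get \eqref{e:K2}. The only cosmetic differences are that the paper expands $\diff\alpha(v,v^\perp)$ and recognizes $f(v)+f(v^\perp)$ rather than the reverse, and that for the vanishing of the fiber average the paper averages \eqref{e:K1} over the circle while you invoke the $\pi/2$-antisymmetry $H(\vartheta)+H(\vartheta+\pi/2)=0$ directly; both arguments are equivalent.
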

\begin{proof}
For every fixed $\theta\in \T$, we first check the formula
\[
[e^{i\theta}W,e^{i\theta}W^\perp]=-\alpha(e^{i\theta}W)e^{i\theta}W-\alpha(e^{i\theta}W^\perp)e^{i\theta}W^\perp.
\]
It is enough to prove the result for $\theta=0$. Using the symmetry of $\nabla$, we get
\[
g([W,W^\perp],W)=g(\nabla_WW^\perp,W)+g(\nabla_{W^\perp}W,W)=-g(\nabla_WW,W^\perp)+0=-\alpha(W)
\]
and analogously $g([W,W^\perp],W^\perp)=-\alpha(W^\perp)$.

To compute the Gaussian curvature, we use the classical identity $K\mu=\diff\alpha$, so that 
\begin{equation}\label{e:Kvv}
K(\pi(v))=\diff\alpha(v,v^\perp).
\end{equation}
Let us now set $\theta:=\theta(v)$. Then the vector fields $e^{i\theta}W$ and $e^{i\theta}W^\perp$ extend the vectors $v$ and $v^\perp$ on the whole $U$. Therefore, we can expand the right-hand side of \eqref{e:Kvv} using \eqref{e:alphatheta}:
\begin{align*}
\diff\alpha(v,v^\perp)&=\diff\big(\alpha(e^{i\theta}W^\perp)\big)\cdot v-\diff\big(\alpha(e^{i\theta}W)\big)\cdot v^\perp-\alpha([e^{i\theta}W,e^{i\theta}W^\perp])\\
&=\diff\big(\alpha(e^{i\theta}W^\perp)\big)\cdot v+\diff\big(\alpha(e^{i(\theta+\pi/2)}W^\perp)\big)\cdot v^\perp +\alpha(v)^2+\alpha(v^\perp)^2\\
&=f(v)+f(v^\perp),
\end{align*}
so that we arrive at \eqref{e:K1}. Fixing $q\in U$ and averaging Identity \eqref{e:K1} over $\pi^{-1}(q)$, we get
\[
K(q)=\frac{1}{2\pi}\int_0^{2\pi}\Big(f\big(e^{i\theta}W(q)\big)+f\big(e^{i(\theta+\pi/2)}W(q)\big)\Big)\diff\theta=\frac{1}{\pi}\int_0^{2\pi}f\big(e^{i\theta}W(q)\big)\diff\theta.
\]
It follows that the function
\[
f_1:SU\to\R,\qquad f_1:=\frac12K\circ\pi-f,  
\]
has zero average along the circle $\pi^{-1}(q)$ for all $q\in U$. Because of this fact, the function
\[
a:SU\to \R,\quad a(v):=\int_0^{\theta(v)} f_1\big(e^{i\theta'}W(\pi(v))\big)\diff\theta'
\]
is well-defined and it is immediate to check that $\partial_\theta a=f_1$, as required.
\end{proof}


\section{The construction of the normal form}\label{s:nf}
This section is entirely dedicated to the proof of Theorem \ref{t:nf}. Recalling that $\hat\lambda$ is the restriction to $SM$ of $\lambda$ we write
\[
\hat\omega_\epsilon:=\epsilon\diff\hat\lambda-\pi^*(b_1\mu) 
\]
for the pullback of $\omega_\epsilon$ to $SM$.

We consider $U\subset M$ an open set on which we have a section $W$ of $\pi:SU\to U$ with associated angular function $\theta$. We further assume that $b_1$ is nowhere vanishing on $U$. We take an open set $U'$ such that $\overline{U'}\subset U$. Then there exists a positive number $r>0$ such that any curve $\gamma:[t_0,t_1]\to M$ with $\gamma(t_0)\in U'$ and length less than $r$ is entirely contained in $U$.
 
Let $\Psi_\epsilon:SU'\to SU$ denote an isotopy for $\epsilon$ in some interval $[0,\epsilon_0]$ such that $\Psi_0$ is the standard inclusion $SU'\hookrightarrow SU$. 
Up to shrinking $\epsilon_0$ further, by the definition of $r>0$ we see that $\Psi_\epsilon$ is obtained integrating an $\epsilon$-dependent vector field $Z_\epsilon$ on $SU$.
We aim at finding $Z_\epsilon$ such that
\begin{equation}\label{e:pb}
\Psi_\epsilon^*\hat\omega_\epsilon=-\pi^*(b_1\mu)+\diff(H_\epsilon\diff\theta) \qquad\text{on}\ SU'
\end{equation}
for some $H_\epsilon:SU\to\R$ with $H_0=0$, satisfying either \eqref{e:H1}, or \eqref{e:H2} if $b_1$ is constant on $U$.

Taking the derivative of \eqref{e:pb} in $\epsilon$ and using that $\partial_\epsilon\hat\omega_\epsilon=\hat\lambda$, we see that \eqref{e:pb} is equivalent to
\[
\Psi_\epsilon^*\big(\diff\hat\lambda+\mathcal L_{Z_\epsilon}\hat\omega_\epsilon\big)=\diff(h_\epsilon\diff\theta),\qquad h_\epsilon:=\partial_\epsilon H_\epsilon.
\]
Thanks to the Cartan formula, this equation can be rewritten as
\begin{equation}\label{e:c1}
\diff\Big[\Psi_\epsilon^*\big(\hat\lambda+\iota_{Z_\epsilon}\hat\omega_\epsilon\big)-h_\epsilon\diff\theta\Big]=0.
\end{equation}
We require now that $Z_\epsilon$ belong to the horizontal distribution $\mathcal H$ tangent to the level sets of $\theta$. By taking the pull-back by $(\Psi_\epsilon)^{-1}$ on both sides, we see that \eqref{e:c1} is solved if there exists a function $c_\epsilon:SU\to\R$ with
\begin{equation}\label{e:c2}
\hat\lambda+\iota_{Z_\epsilon}\hat\omega_\epsilon+\diff c_\epsilon=h_\epsilon\circ\Psi_\epsilon^{-1}\diff\theta.
\end{equation}

This equation can be decomposed in the horizontal and vertical component at $v\in SU$:
\begin{subequations}
\begin{align}
\hat\omega_\epsilon|_{\mathcal H}(Z_\epsilon,\cdot\,)&=-\hat\lambda|_{\mathcal H}-\diff|_{\mathcal H}c_\epsilon,\label{e:c3a}\\
h_\epsilon\circ\Psi_\epsilon^{-1}(v)&=\epsilon g(Z_\epsilon,v^\perp)+\partial_\theta c_\epsilon(v).\label{e:c3b}
\end{align}
\end{subequations}
For any function $c_\epsilon$, Equation \eqref{e:c3a} determines uniquely the vector field $Z_\epsilon$, and hence the isotopy $\Psi_\epsilon$, since $\hat\omega_\epsilon|_{\mathcal H}$ is a non-degenerate bilinear form for $\epsilon$ small enough. Then Equation \eqref{e:c3b} determines $h_\epsilon$, and hence the function $H_\epsilon$, uniquely.

We expand $Z_\epsilon$ and $h_\epsilon$ to the first order in $\epsilon$:
\[
Z_\epsilon=Z_0+\epsilon Z'_\epsilon,\qquad h_\epsilon=h_0+\epsilon h'_\epsilon.
\]
We choose $c_\epsilon=\epsilon^2c'_\epsilon$ for some function $c'_\epsilon$ to be determined later. Then we evaluate Equations \eqref{e:c3a} and \eqref{e:c3b} at $\epsilon=0$. From \eqref{e:c3a} we get
\[
\hat\omega_0|_{\mathcal H}(Z_0,\cdot\,)=-\hat\lambda|_{\mathcal H}.
\]
By the definition of $\hat\omega_0$ and $\hat\lambda$, this equation can be rewritten as
\[
-b_1g(\diff\pi[ Z_0^\perp],\cdot\,)=-g(v,\cdot\,)\qquad \text{on }\mathcal H,
\]
from which it follows that
\[
Z_0=-b_1^{-1}\tilde v^\perp.
\]
Second, from \eqref{e:c3b} we get that
\[
h_0=0.
\]
Dividing \eqref{e:c3a} and \eqref{e:c3b} by $\epsilon$, we arrive at the equivalent equations
\begin{subequations}
	\begin{align}
	\hat\omega_\epsilon|_{\mathcal H}(Z'_\epsilon,\cdot\,)&=b_1^{-1}\diff\hat\lambda|_{\mathcal H}(\tilde v^\perp,\cdot\,)-\epsilon\diff|_{\mathcal H}c'_\epsilon,\label{e:c4a}\\
	h'_\epsilon\circ\Psi_\epsilon^{-1}(v)&=-b_1^{-1}+\epsilon g(\diff\pi[Z'_\epsilon],v^\perp)+\epsilon\partial_\theta c'_\epsilon(v)
	.\label{e:c4b}
	\end{align}
\end{subequations}
Evaluating \eqref{e:c4a} at $\epsilon=0$ and using the expression for $\diff\hat\lambda$ obtained in Lemma \ref{l:dlambda}, we find the equation
\[
-b_1g(\diff\pi[Z'_0{}^\perp],\cdot\,)=b_1^{-1}\alpha(v^\perp)g(v^\perp,\cdot\,)-b_1^{-1}\alpha,
\]
from which it follows that 
\[
g(\diff\pi[Z'_0{}^\perp],v)=-b_1^{-2}\alpha(v),\qquad g(\diff\pi[Z'_0{}^\perp],v^\perp)=0,
\]
which is equivalent to
\[
Z_0'=b_1^{-2}\alpha(v)\tilde v^\perp.
\]
Evaluating \eqref{e:c4b} at $\epsilon=0$, we get
\[
h'_0=-b_1^{-1},
\] 
which yields the desired expansion of $H_\epsilon$ to the second order in $\epsilon$.

Let us now go to higher order under the hypothesis that $b_1$ is a non-zero constant. We write
\[
Z'_\epsilon=Z'_0+\epsilon Z_\epsilon'',\qquad h_\epsilon'=-b_1^{-1}+\epsilon h_\epsilon'',\qquad c_\epsilon'=c_0'+\epsilon c_\epsilon''
\]
and substitute these expressions in \eqref{e:c4a} and \eqref{e:c4b}. After dividing by $\epsilon$, we get the new set of equations
\begin{subequations}
	\begin{align}
	\hat\omega_\epsilon|_{\mathcal H}(Z''_\epsilon,\cdot\,)&=b_1^{-2}\alpha(v)\diff\hat\lambda|_{\mathcal H}(\tilde v^\perp,\cdot\,)-\diff|_{\mathcal H}c'_0-\epsilon\diff|_{\mathcal H}c''_\epsilon,\label{e:c5a}\\
	h''_\epsilon\circ\Psi_\epsilon^{-1}(v)&=
	b_1^{-2}\alpha(v)+\epsilon g(\diff\pi[Z_\epsilon''],v^\perp)+\partial_\theta c'_0(v)+\epsilon\partial_\theta c_\epsilon''(v).
\label{e:c5b}
	\end{align}
\end{subequations}
We now choose $c_0'(v)=b_1^{-2}\alpha(v^\perp)$, so that
\[
b_1^{-2}\alpha(v)+\partial_\theta c_0'(v)=0.
\]
In this case, evaluating \eqref{e:c5b} at $\epsilon=0$, we get $h''_0=0$. Thus we can substitute $h_\epsilon''=\epsilon h_\epsilon'''$ in \eqref{e:c5b} and dividing this equation by $\epsilon$, we find
\[
h_\epsilon'''\circ\Psi_\epsilon^{-1}(v)=g(\diff\pi[Z_\epsilon''],v^\perp)+\partial_\theta c_\epsilon''(v).
\]
Evaluating at $\epsilon=0$ we have the formula
\begin{equation}\label{e:hfinal}
h_0'''(v)=g(\diff\pi[ Z_0''],v^\perp)+\partial_\theta c_0''(v).
\end{equation}
In order to determine the first summand on the right, we evaluate \eqref{e:c5a} at $\epsilon=0$ and plug in $\tilde v$:
\[
-b_1g(\diff\pi[Z''_0{}^\perp],v)=-b_1^{-2}\Big[\alpha(v)^2+\diff\big(\alpha(e^{i\theta(v)}W^\perp)\big)\cdot v\Big].
\]
Here we have used that $b_1$ is constant. Recalling the definition of the function $f$ from \eqref{e:f}, we arrive at
\[
g(\diff\pi[Z''_0],v^\perp)=-b_1^{-3}f(v).
\]
Let $a:SU\to\R$ be the function given by Lemma \ref{l:K} and choose $c''_\epsilon=c''_0=-b_1^{-3}a$. Thanks to \eqref{e:K2} and the fact that $b_1$ is constant, we see that \eqref{e:hfinal} is equivalent to
\[
h_0'''=-b_1^{-3}f-b_1^{-3}\partial_\theta a=-\tfrac12b_1^{-3}K.
\]
Thus
\[
h_\epsilon=-\epsilon b_1^{-1}-\tfrac12\epsilon^3b_1^{-3}K
\]
and the formula for $H_\epsilon$ follows. This finishes the proof of Theorem \ref{t:nf}.\hfill\qed


\section{Trapping regions for the magnetic flow}
\label{s:kam}

We start by recalling the celebrated twist theorem of Moser \cite{Mos62}. We give here the version for flows which can be easily deduced from the original one for mappings.

Consider the standard symplectic form $\omega=\diff I\wedge\diff\varphi$ on the open annulus $(I_-,I_+)\times\T$ with coordinates $(I,\varphi)$, and let 
\begin{equation}\label{e:Haction}
H_{\epsilon,\theta}=h_0(\epsilon)+\epsilon^kh_1\circ I+o(\epsilon^k)
\end{equation}
be a Hamiltonian periodically depending on the time $\theta$. Here $h_0(\epsilon)$ is a real constant, $k$ is a positive integer and $h_1:(I_-,I_+)\to\R$ a function. We call $\Phi_{H_\epsilon}$ the Hamiltonian flow on the set $(I_-,I_+)\times\T^2$, where the second angular coordinate is given by the periodic time $\theta$.

\begin{thm}[The Moser twist theorem \cite{Mos62}]\label{t:moser}
Suppose that for some $I_0\in (I_-,I_+)$ there holds
\begin{equation*}
\frac{\diff^2 h_1}{\diff I^2}(I_0)\neq 0.
\end{equation*}
Then there exists $\epsilon_0>0$ such that for all $\epsilon\in[0,\epsilon_0]$ there is a two-dimensional torus $\mathcal T_{I_0}^\epsilon\subset (I_-,I_+)\times\T^2$ which is $C^1$-close to the torus $\{I_0\}\times \T^2$ and is invariant under $\Phi_{H_\epsilon}$. In particular, $\mathcal T_{I_0}^\epsilon$ divides $(I_-,I_+)\times\T^2$ into two open regions both invariant under the flow.
\end{thm}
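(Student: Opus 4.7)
The strategy is to reduce the statement to Moser's original twist theorem for area-preserving maps of the annulus. Since $H_{\epsilon,\theta}$ is $2\pi$-periodic in $\theta$, I take $\{\theta=0\}$ as a global Poincar\'e section and consider the time-$2\pi$ return map $P_\epsilon \colon (I_-,I_+) \times \T \to (I_-,I_+) \times \T$. The map $P_\epsilon$ is exact area-preserving with respect to $\omega = \diff I \wedge \diff \varphi$, being the time-$2\pi$ map of a time-periodic Hamiltonian flow on $T^*\T$. Any non-contractible invariant circle of $P_\epsilon$ suspends under the flow to a $2$-torus in $(I_-,I_+) \times \T^2$ invariant under $\Phi_{H_\epsilon}$; such a torus separates the ambient annulus product into two invariant regions by the Jordan curve theorem applied on the cylindrical cross-section.

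Next, I would compute $P_\epsilon$ to leading order. From the Hamilton equations $\dot I = -\partial_\varphi H_{\epsilon,\theta}$ and $\dot\varphi = \partial_I H_{\epsilon,\theta}$ together with the expansion of $H_{\epsilon,\theta}$, one has $\dot I = o(\epsilon^k)$ (because the $\epsilon^k$-term is $\varphi$-independent) and $\dot \varphi = \epsilon^k (d h_1/dI)(I) + o(\epsilon^k)$, uniformly on any compact subannulus about $\{I_0\}\times\T$. Integrating from $\theta = 0$ to $\theta = 2\pi$ and controlling the $o(\epsilon^k)$ term via Gronwall yields
\begin{equation*}
P_\epsilon(I,\varphi) = \bigl(I + o(\epsilon^k),\; \varphi + 2\pi\epsilon^k \tfrac{d h_1}{dI}(I) + o(\epsilon^k)\bigr).
\end{equation*}
Thus $P_\epsilon$ is a perturbation of the integrable twist map $(I,\varphi) \mapsto (I, \varphi + 2\pi\epsilon^k (dh_1/dI)(I))$, and the hypothesis $(d^2 h_1/dI^2)(I_0) \neq 0$ ensures that the twist $2\pi\epsilon^k (d^2 h_1/dI^2)(I_0)$ is non-zero at $I_0$.

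The final step is to invoke Moser's classical twist theorem. A subtle point is that both the twist and the remainder have sizes $O(\epsilon^k)$ and $o(\epsilon^k)$ respectively, so the perturbation is small \emph{compared to} the twist; this is precisely the regime handled by Moser's result, though one must use the version that allows the twist to be small (equivalently, rescale the angular velocity by $\epsilon^{-k}$, which turns the integrable part into a map with twist of order $1$ and the perturbation into one of size $o(1)$ in the appropriate $C^r$-norm). Moser's theorem then delivers, for $\epsilon \in [0,\epsilon_0]$ with $\epsilon_0 > 0$ small, an invariant circle $C_\epsilon$ of $P_\epsilon$ that is $C^1$-close to $\{I_0\} \times \T$. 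Suspending $C_\epsilon$ under $\Phi_{H_\epsilon}$ yields the desired torus $\mathcal T_{I_0}^\epsilon \subset (I_-,I_+) \times \T^2$, whose $C^1$-closeness to $\{I_0\} \times \T^2$ follows from that of $C_\epsilon$.

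The main obstacle lies in this last step: one must verify that the remainder $o(\epsilon^k)$ in the Hamiltonian expansion translates, after rescaling, into a genuinely small perturbation of the twist map in the smoothness class required by Moser's theorem. This is a bookkeeping exercise with the derivatives of the Hamiltonian flow in the initial data and in $\epsilon$, and it is the only place where nontrivial analytic input is needed beyond the reduction to the Poincar\'e map.
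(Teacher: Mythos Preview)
The paper does not actually prove this theorem: it is stated as a classical result of Moser \cite{Mos62}, with only the remark that ``the version for flows \ldots\ can be easily deduced from the original one for mappings.'' Your proposal carries out exactly this reduction---passing to the time-$2\pi$ Poincar\'e return map on $\{\theta=0\}$, identifying it as an $o(\epsilon^k)$-perturbation of an integrable twist map, and invoking Moser's original theorem---so your approach is precisely what the paper alludes to but leaves implicit.
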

\begin{rem}\label{r:moser}
Applying the Moser twist theorem to $I_0^-\in (I_-,I_0)$ and $I_0^+\in(I_0,I_+)$ arbitrarily close to $I_0$ (which can also be taken to be non-resonant), we find $\epsilon_0$ such that for $\epsilon\in[0,\epsilon_0]$
all the trajectories of $\Phi_{H_\epsilon}$ starting in the region $\mathcal U'$ between the tori $\mathcal T_{I_0^-}^\epsilon$ and $\mathcal T_{I_0^+}^\epsilon$ will remain in $\mathcal U'$ for all times.
\end{rem}
Let us see how we can apply the Moser twist theorem for local perturbations of an autonomous Hamiltonian $H_1:M\to\R$ on some symplectic surface $(M,\omega)$. Suppose that $L_{c_0}$ is a connected component of a regular level set $\{H_1=c_0\}$ which is diffeomorphic to a circle. Then there is a tubular neighborhood $U$ of $L_{c_0}$ foliated by embedded circles $c\mapsto L_c$ for $c\in(c_-,c_+)$ so that $H_1(L_c)=c$. By \cite[Section 50]{Arnold:1978aq}, there are action-angle coordinates on the neighborhood of $L_{c_0}$ foliated by the circles $L_c$. The action variable $I$ can be written as a composition $\tilde I\circ H_1$, where $\tilde I$ is the strictly monotonically increasing function given by
\[
\tilde I(c)=\int_{\mathcal A_{c_0,c}}\omega
\]
and $\mathcal A_{c_0,c}$ denotes the annular region between $L_{c_0}$ and $L_c$. We have that
\[
\frac{\diff\tilde I}{\diff c}=\int_{L_c}\eta,
\]
where $\eta$ is any one-form on $U$ satisfying $\omega=\eta\wedge\diff H_1$. For example we can take 
\begin{equation}\label{e:eta}
\eta=\omega\big(\tfrac{1}{|\nabla H_1|^2}\nabla H_1,\cdot\,\big),
\end{equation}
where the gradient and the norm are taken with respect to an arbitrary metric. With this choice, we see that there exists a positive constant $\delta>0$ not depending on $c$ such that
\begin{equation}\label{e:ds}
\frac{\diff \tilde I}{\diff c}\geq \delta\int_{L_c}|\nabla H_1|^{-1}\diff s,
\end{equation}
where $\diff s$ is the arc-length of $L_c$.

If $\tilde h_1$ is the inverse function of $\tilde I$, we conclude that $H_1=\tilde h_1\circ \tilde I$ and
\[
\forall\,e=\tilde I(c),\qquad \frac{\diff^2 \tilde h_1}{\diff e^2}(e)\neq0\quad\Longleftrightarrow\quad\frac{\diff^2\tilde I}{\diff c^2}(c)\neq0.
\]
\begin{dfn}
We say that $L_c$ is \textit{non-resonant} if $\frac{\diff^2\tilde I}{\diff c^2}(c)\neq0$.
\end{dfn}
\begin{rem}\label{r:idc}
Two simple conditions to show the existence of non-resonant circles are as follows. First, if $\frac{\diff \tilde I}{\diff c}$ is not constant on $(c_-,c_+)$, then there exists a non-resonant circle $L_c$ with $c\in(c_-,c_+)$. Second, if
\[
\lim_{c\to c_+}\frac{\diff\tilde I}{\diff c}=+\infty,
\]
then there is a sequence $(c_n)\subset(c_-,c_+)$ with the property that $c_n\to c_+$ and $L_{c_n}$ is a non-resonant circle. A similar statement holds for $c_-$ instead of $c_+$.
\end{rem}
\begin{rem}\label{r:scale}
The non-resonance condition for a circle $L_c$ is a property that is invariant upon multiplying the symplectic form by a non-zero constant. In view of the applications to magnetic fields, we need to consider the case in which $\omega$ also depends on $H_1$ in a certain way. More specifically, suppose that $H_1$ is positive and that $\omega=H_1^{-1}\omega_1$ for some symplectic form $\omega_1$. Then
\[
H_1^{-1}\omega_1=\eta\wedge\diff H_1\quad\Longleftrightarrow\quad \omega_1=\eta\wedge\diff(\tfrac12H_1^2).
\]
This means that
\[
\frac{\diff\tilde I_{\omega}}{\diff c}(c)=\frac{\diff\tilde I_{\omega_1}}{\diff c_1}(\tfrac 12c^2),
\]
where we put the symplectic form in subscript to distinguish the two cases. Differentiating in $c$, we get
\[
\frac{\diff^2\tilde I_{\omega}}{\diff c^2}(c)=c\frac{\diff^2\tilde I_{\omega_1}}{\diff^2 c_1}(\tfrac 12c^2),
\]
so that a level set of $H_1$ is non-resonant with respect to $H^{-1}_1\omega_1$ if and only if it is non-resonant as a level set of $\tfrac{1}{2}H_1^2$ with respect to $\omega_1$.
\end{rem}
Let us consider a family of time-depending Hamiltonians on $U$ having the form
\[
H_{\epsilon,\theta}=h_0(\epsilon)+\epsilon^kH_1+o(\epsilon^k),
\]
where $h_0(\epsilon)$ is some constant, $k$ is a positive integer and the dependence in the time $\theta\in\T$ is periodic. Using the action-angle coordinates on $U$ around $L_{c_0}$ described above, we can bring the Hamiltonian in the form \eqref{e:Haction}. If now $L_{c_0}$ is a non-resonant circle, the following statement follows directly from the Moser twist Theorem \ref{t:moser} and the subsequent Remark \ref{r:moser}: for all neighborhoods $\mathcal U'$ of $L_{c_0}$, there exists an $\epsilon_0>0$ and a neighborhood $\mathcal U''\subset \mathcal U'$ of $L_{c_0}$ with the property that for all $\epsilon\in[0,\epsilon_0]$, every trajectory of $\Phi_{H_\epsilon}$ starting in $\mathcal U''\times\T$ will stay in $\mathcal U'\times \T$ for all times.

We can now use the above discussion to prove trapping for strong magnetic fields.
\begin{proof}[Proof of Theorem \ref{t:kam}]
Suppose that $L=L_{c_0}$ is a non-resonant circle for $\zeta=b_1^{-2}$ with respect to the symplectic form $\mu$ on $M$. By Remark \ref{r:scale}, $L$ is a non-resonant circle for $H_1:=-\tfrac12b_1^{-1}$ with respect to $\omega=b_1\mu$. Let $U$ be an arbitrary neighborhood of $L$. Upon shrinking it, we can assume that $U$ is a tubular neighborhood of $L$ so that $SU$ admits an angular function $\theta$. We apply Theorem \ref{t:nf} to $U$ and a further tubular neighborhood $U'$ of $L$ with $\bar U'\subset U$: There exists $\epsilon_0$ such that for all $\epsilon\in(0,\epsilon_0]$ there is a Hamiltonian $H_{\epsilon,\theta}=h_0(\epsilon)+\epsilon^2H_1+o(\epsilon^2)$ with $h_0(\epsilon)=0$ such that the Hamiltonian flow $\Phi_{H_\epsilon}$ is conjugated via the map $\Psi_\epsilon$ to the magnetic flow $\Phi_{(g,\epsilon^{-1}b_1)}$, up to time reparametrization. Applying the Moser twist theorem, we see that, upon shrinking $\epsilon_0$, there is a neighborhood $U''$ of $L$ in $U'$ such that for all $\epsilon\in(0,\epsilon_0)$ a trajectory of $\Phi_{H_\epsilon}$ starting in $SU''$ will stay in $SU'$ for all times. This means that all $(g,\epsilon^{-1}b_1)$-geodesics with initial velocity vector in $\Psi_\epsilon(S U'')$ will stay in $\pi(\Psi_\epsilon(SU'))$ for all times. Upon shrinking $\epsilon_0$ we see that there exists a neighborhood $U'''$ of $L$ with $SU'''\subset\Psi_\epsilon(SU'')$ and that $\pi(\Psi_\epsilon(SU'))\subset U$. This means that any $(g,\epsilon^{-1}b_1)$-geodesics starting in $U_1:=U'''$ will stay inside $U$ for all times. 

This finishes the proof of Theorem \ref{t:kam} for the function $\zeta=b_1^{-2}$. The proof of Theorem \ref{t:kam} when $b_1$ is a positive constant and the function $\zeta$ is the Gaussian curvature $K$ is analogous and is left to the reader.
\end{proof}
We now give some criteria for the existence of non-resonant circles for $H_1$. The first criterion is due to Castilho \cite[Corollary 1.3]{Castilho:2001}. We present the short proof here for the convenience of the reader.
\begin{lem}\label{l:mincircle}
Suppose that $H_1$ attains a minimum or maximum at $L_{c_0}$ and that there are no other critical points of $H_1$ in a neighborhood of $L$ (for example $L_{c_0}$ is a Morse--Bott component for $H_1$). Then there are non-resonant circles on either sides of $L_{c_0}$ accumulating at $L_{c_0}$.
\end{lem}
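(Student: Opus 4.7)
The plan is to apply the second criterion of Remark \ref{r:idc}, in its $c_-$ incarnation, on each of the two sides of $L_{c_0}$. Without loss of generality, assume $L_{c_0}$ is a strict local minimum for $H_1$. Since $L_{c_0}$ is an isolated connected critical set of minima, a sufficiently small tubular neighborhood $V$ of $L_{c_0}$ is cut by $L_{c_0}$ into two open annuli $V_{+}$ and $V_{-}$, and for every $c$ in some interval $(c_0, c_0 + \delta_*)$ with $\delta_* > 0$ small, the intersection $\{H_1 = c\} \cap V$ is a disjoint union of two embedded circles $L_c^{+} \subset V_{+}$ and $L_c^{-} \subset V_{-}$, each isotopic to $L_{c_0}$ inside $V$. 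In particular the lengths $\mathrm{length}(L_c^{\pm})$ remain bounded below by a positive constant as $c \to c_0^{+}$.

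Focusing on the $+$-side (the other side being identical), the inequality \eqref{e:ds} together with the crude pointwise bound $|\nabla H_1|^{-1} \geq (\max_{L_c^{+}} |\nabla H_1|)^{-1}$ yields
\[
\frac{\diff \tilde I}{\diff c}(c) \;\geq\; \delta \cdot \frac{\mathrm{length}(L_c^{+})}{\max_{L_c^{+}} |\nabla H_1|}.
\]
As $c \to c_0^{+}$, the circles $L_c^{+}$ lie in a nested family of neighborhoods of $L_{c_0}$ shrinking to $L_{c_0}$; since $\nabla H_1$ vanishes identically on $L_{c_0}$ and is continuous, this forces $\max_{L_c^{+}} |\nabla H_1| \to 0$. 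Combined with the uniform lower bound on length, this gives $\diff \tilde I / \diff c \to +\infty$ as $c \to c_0^{+}$. The $c_-$ analogue of the second criterion in Remark \ref{r:idc} then produces a sequence $c_n \to c_0^{+}$ with $L^{+}_{c_n}$ non-resonant, and the same argument on the $-$-side furnishes non-resonant circles accumulating at $L_{c_0}$ from the other annulus.

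The one non-routine step is the local structural claim about $\{H_1 = c\} \cap V$, namely that it decomposes as two circles isotopic to $L_{c_0}$ for $c$ slightly above $c_0$. Under the Morse--Bott hypothesis this is immediate from the normal form $H_1 = c_0 + r^2 a(s)$ with $a>0$. In the general case it follows from the fact that $L_{c_0}$ is an isolated connected critical set of local minima by a short connectedness/deformation-retract argument inside a sufficiently small tubular neighborhood $V$ (choose $V$ so that $L_{c_0}$ is a deformation retract and use that the sublevels $\{H_1 \le c\}\cap V$ form a monotone exhaustion of $L_{c_0} \cup V_{+}$ and of $L_{c_0} \cup V_{-}$ by compact sets, forcing the intersection $\{H_1=c\}\cap V_{\pm}$ to be a single embedded circle once $c - c_0$ is small).
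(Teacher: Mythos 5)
Your proof is correct and follows essentially the same route as the paper's: the key step in both is the bound from \eqref{e:ds},
\[
\frac{\diff\tilde I}{\diff c}\;\geq\;\delta\big(\max_{L_c}|\nabla H_1|\big)^{-1}\,\mathrm{length}(L_c),
\]
together with the observation that the length stays bounded below while $\max_{L_c}|\nabla H_1|\to 0$ as $c\to c_0$, so $\diff\tilde I/\diff c\to+\infty$ and Remark \ref{r:idc} produces the desired non-resonant circles. The only difference is that you spell out the local topology of the level sets near $L_{c_0}$ (the two-annuli decomposition and the fact that $\{H_1=c\}\cap V_{\pm}$ is a single circle for $c$ close to $c_0$), which the paper simply takes for granted by fixing a one-sided family of circles $c\mapsto L_c$; this extra care is harmless and the underlying estimate is identical.
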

\begin{proof}
Let us suppose without loss of generality that $c_0$ is the minimum of $H_1$. Consider a family of circles $c\mapsto L_c$ contained in regular level sets of $H_1$ staying on one side of $L_{c_0}$ for $c\in(c_0,c_1)$. In particular, $L_c\to L_{c_0}$ uniformly as $c\to c_0$. It is enough to show that
\begin{equation}\label{e:mincircle}
\lim_{c\to c_0}\frac{\diff \tilde I}{\diff c}=+\infty.
\end{equation}
By inequality \eqref{e:ds}, we get the lower bound
\[
\frac{\diff \tilde I}{\diff c}=\delta\int_{L_c}|\nabla H_1|^{-1}\diff s\geq \delta(\max_{L_c}|\nabla H_1|)^{-1}\mathrm{length}(L_c).
\]
As $c$ tends to $c_0$, $\mathrm{length}(L_c)$ is bounded away from zero, while $\max_{L_c} |\nabla H_1|$ converges to zero. The limit in \eqref{e:mincircle} follows.
\end{proof}
\begin{proof}[Proof of Corollary \ref{c:mincircle}]
We recall that by Remark \ref{r:scale}, a circle $L$ is non-resonant for $H_1:=-\tfrac12b_1^{-1}$ with respect to $\omega=b_1\mu$ if and only if it is non-resonant for $\zeta=b_1^{-2}$ with respect to $\mu$. Let $U$ be an arbitrary neighborhood of the critical circle $L_{c_0}$. We can assume that $U$ is a tubular neighborhood of $L$. We apply Theorem \ref{t:nf} to $U$ and a neighborhood $U'$ of $L$ such that $\bar U'\subset U$ and get a diffeomorphism $\Psi_\epsilon$ and a function $H_{\epsilon,\theta}=h_0(\epsilon)+\epsilon^2 H_1+o(\epsilon^2)$ with $h_0(\epsilon)=0$ for all $\epsilon\in[0,\epsilon_0]$. By Lemma \ref{l:mincircle} there are non-resonant circles $L_-$ and $L_+$ for $H_1$ inside $U'$ on either side of $L_{c_0}$. Applying the Moser twist Theorem \ref{t:moser} to $L_-$ and $L_+$ we see that, for a possibly smaller $\epsilon_0$, there is a neighborhood $U''$ of $L$ such that for all $\epsilon\in[0,\epsilon_0]$ every trajectory of $\Phi_{H_\epsilon}$ with initial condition in $SU''$ remains in $SU'$ for all times. Taking an even smaller $\epsilon_0$, we get $\pi(\Psi_\epsilon(SU'))\subset U$ and we find a neighborhood $U'''$ of $L$ such that $SU'''\subset \Psi_\epsilon(SU'')$. Thus any $(g,\epsilon^{-1}b_1)$-geodesics starting in $U_1:=U'''$ will be contained in $U$ for all times.

The proof of Corollary \ref{c:mincircle} for $\zeta=b_1^{-2}$ is completed. For $\zeta=K$ and $b_1$ a positive constant, the proof is analogous and we omit it.
\end{proof}
Let us analyze the situation in a neighborhood of an isolated minimum or maximum $q_*\in U$ for $H_1$. Let $c_0:=H_1(q_*)$, so that we have a family of circles $L_c$ for $c$ in some interval $(c_0,c_+)$ (or $(c_-,c_0)$) converging uniformly to $q_*$ as $c\to c_0$.
\begin{lem}\label{l:minpoint}
Let $q_*\in M$ be an isolated local minimum or maximum for $H_1$. If $q_*$ is degenerate, then
\begin{equation}\label{e:qdeg}
\lim_{c\to c_0}\frac{\diff \tilde I}{\diff c}=+\infty.
\end{equation}
If $q_*$ is non-degenerate, then
\begin{equation}\label{e:qnondeg}
\lim_{c\to c_0}\frac{\diff \tilde I}{\diff c}= \frac{2\pi}{\sqrt{\det\mathrm{Hess}\, H_1 (q_*)}} ,\qquad \lim_{c\to c_0}\frac{\diff^2 \tilde I}{\diff c^2}=\pi\Delta\rho(q_*),
\end{equation}
where the Hessian is taken in Darboux coordinates around $q_*$, and $\rho$ is a function with the property that $\omega=\rho\,\diff x\wedge\diff y$ in coordinates $(x,y)$ around $q_*$ such that $H_1=c_0+\tfrac{1}{2}(x^2+y^2)$. 
\end{lem}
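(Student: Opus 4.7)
The plan is to treat the non-degenerate and degenerate cases separately, working in well-chosen local coordinates around $q_*$. In the non-degenerate case I would apply the Morse lemma to obtain coordinates $(x,y)$ centered at $q_*$ with $H_1=c_0+\tfrac12(x^2+y^2)$, and write $\omega=\rho\,\diff x\wedge \diff y$ in these coordinates. Since $\{H_1\leq c_0+\tau\}$ is the Euclidean disk of radius $\sqrt{2\tau}$, passing to polar coordinates gives
\[
\tilde I(c_0+\tau)=\int_0^{\sqrt{2\tau}}\int_0^{2\pi}\rho(r\cos\theta,r\sin\theta)\,r\,\diff\theta\,\diff r,
\]
and differentiating in $\tau$ the prefactor $r$ cancels with $\tfrac{\diff}{\diff\tau}\sqrt{2\tau}$, yielding
\[
\frac{\diff \tilde I}{\diff c}(c_0+\tau)=\int_0^{2\pi}\rho\bigl(\sqrt{2\tau}\cos\theta,\sqrt{2\tau}\sin\theta\bigr)\,\diff\theta .
\]
Plugging in the Taylor expansion of $\rho$ at $q_*$, the linear terms integrate to zero by symmetry, and the standard identities $\int_0^{2\pi}\cos^2\theta\,\diff\theta=\int_0^{2\pi}\sin^2\theta\,\diff\theta=\pi$ and $\int_0^{2\pi}\cos\theta\sin\theta\,\diff\theta=0$ produce
\[
\frac{\diff \tilde I}{\diff c}(c_0+\tau)=2\pi\rho(q_*)+\pi\tau\,\Delta\rho(q_*)+O(\tau^{3/2}),
\]
from which both limits in \eqref{e:qnondeg} follow once one identifies $\rho(q_*)=1/\sqrt{\det \mathrm{Hess}\,H_1(q_*)}$. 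This identification is elementary: if $\phi$ is the local change of variables from $(x,y)$ to Darboux coordinates $(u,v)$, then $\phi^*(\diff u\wedge \diff v)=\rho\,\diff x\wedge \diff y$ gives $\det D\phi(q_*)=\rho(q_*)$, while $\mathrm{Hess}\,H_1(q_*)$ in Darboux coordinates equals $(D\phi(q_*))^{-T}(D\phi(q_*))^{-1}$ (it is the identity in Morse coordinates), whose determinant is $\rho(q_*)^{-2}$.

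For the degenerate case I would work in Darboux coordinates $(u,v)$ centered at $q_*$, in which $\tilde I(c_0+\tau)$ equals the Euclidean area of $\{H_1\leq c_0+\tau\}$ near the origin and, by the coarea formula, $\frac{\diff \tilde I}{\diff c}(c_0+\tau)=\int_{L_{c_0+\tau}}|\nabla H_1|^{-1}\diff s$. The key tool is the rescaling $q\mapsto\sqrt{\tau}\,q$: the rescaled Hamiltonian $H_1^{(\tau)}(q):=(H_1(\sqrt{\tau}\,q)-c_0)/\tau$ converges in $C^\infty_{\mathrm{loc}}$ to $\tfrac12 Q$, where $Q=\mathrm{Hess}\,H_1(q_*)$ is positive semidefinite with $\det Q=0$. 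Hence $\{Q\leq 2\}$ is unbounded (a strip when $Q$ has rank $1$, all of $\R^2$ when $Q=0$), and the identity $\tilde I(c_0+\tau)/\tau=\mathrm{area}(\{H_1^{(\tau)}\leq 1\})$ implies, for every fixed ball $B_R(0)$, that
\[
\frac{\tilde I(c_0+\tau)}{\tau}\;\geq\;\mathrm{area}\bigl(\{H_1^{(\tau)}\leq 1\}\cap B_R(0)\bigr),
\]
whose right-hand side converges as $\tau\to 0^+$ to $\mathrm{area}(\{Q\leq 2\}\cap B_R(0))$, which tends to $+\infty$ as $R\to\infty$. An analogous rescaling of the coarea integral $\int_{L_{c_0+\tau}}|\nabla H_1|^{-1}\diff s$ identifies it with the divergent integral over the unbounded limit curve $\{Q=2\}$, forcing $\frac{\diff \tilde I}{\diff c}(c_0+\tau)\to+\infty$ as $\tau\to 0^+$.

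The main technical subtlety is the degenerate case: one must make the rescaling arguments uniform and control the integrals on the non-compact part of $\{Q=2\}$. This is feasible because $q_*$ is an \emph{isolated} extremum, so that $H_1-c_0$ stays bounded away from zero outside any fixed neighborhood of $q_*$, confining the entire computation to such a neighborhood.
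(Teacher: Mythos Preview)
Your treatment of the non-degenerate case is correct and in fact tidier than the paper's: you work entirely in Morse coordinates and read off both limits from the single formula
\[
\frac{\diff\tilde I}{\diff c}(c_0+\tau)=\int_0^{2\pi}\rho\bigl(\sqrt{2\tau}\cos\theta,\sqrt{2\tau}\sin\theta\bigr)\,\diff\theta
\]
via the Taylor expansion of $\rho$. The paper instead uses Darboux coordinates and the formula $\frac{\diff\tilde I}{\diff c}=\int_{L_c}\iota_{\nabla H_1/|\nabla H_1|^2}\omega$ for the first limit, then switches to Morse coordinates and differentiates $\frac{\diff\tilde I}{\diff c}=\int_0^{2\pi}\rho\,\diff\theta$ once more (via Stokes) for the second. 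Your route avoids this split and the change-of-variables identification $\rho(q_*)=(\det\mathrm{Hess}\,H_1(q_*))^{-1/2}$ is exactly the right bridge.

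For the degenerate case your blow-up argument for the area ratio $\tilde I(c_0+\tau)/\tau$ is correct and parallels the paper, which instead writes $H_1=c_0+\tfrac12(ux)^2+o(r^2)$ in Darboux coordinates and bounds $\{H_1\le c\}$ explicitly from below by a disk (when $u=0$) or a rectangle (when $u\neq0$); both approaches land on $\tilde I(c)/(c-c_0)\to\infty$ and then declare \eqref{e:qdeg}. One caveat on your coarea-rescaling sentence: when $\mathrm{Hess}\,H_1(q_*)=0$ the set $\{Q=2\}$ is \emph{empty}, not an ``unbounded limit curve'', so the rescaled level sets $\{H_1^{(\tau)}=1\}$ escape every fixed ball and there is no limit integral to compare with. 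In that sub-case you should either rescale by the order of the first nonvanishing jet (if one exists), or simply fall back on your area argument, which is already what the paper does.
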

\begin{proof}
We deal only with the case of a local minimum and leave the case of a local maximum to the reader. We make the preliminary remark that, up to an additive constant, we have
\[
\tilde I(c)=\int_{\{H_1\leq c\}}\omega,
\]
where $\{H_1\leq c\}$ is a sublevel set of $H_1$ in a neighborhood of $q_*$.

Assume first that $q_*$ is degenerate. There exists Darboux coordinates $(x,y)$ around $q_*$ such that $H_1=c_0+\tfrac12(ux)^2+o(r^2)$ for some constant $u\geq 0$. In particular, 
we find a constant $C>0$ such that
\[
H_1-c_0\leq \tfrac12(ux)^2+\tfrac12(Cr)^3. 
\]
We distinguish two cases. 

If $u=0$, then we readily see that
\[
\{r^2\leq C^{-2}2^{2/3}(c-c_0)^{2/3}\}\subset\{H_1\leq c\}.
\]
Therefore,
\begin{align*}
\frac{\tilde I(c)}{c-c_0} &=\frac{1}{c-c_0}\int_{\{H_1\leq c\}}\omega \\
				&\geq \frac{1}{c-c_0} \int_{\{r^2\leq C^{-2}2^{2/3}(c-c_0)^{2/3}\}}\omega \\
				& = \frac{\pi C^{-2}2^{2/3}(c-c_0)^{2/3}}{c-c_0}\\
				&=\frac{\pi 2^{2/3}}{C^2(c-c_0)^{1/3}}.
\end{align*}
Since the last term diverges as $c$ tends to $c_0$, we get
\eqref{e:qdeg}. 

If $u\neq0$, then 
\[
\Big\{x^2\leq u^{-2}(c-c_0),\ \ y^2\leq C^{-2}(c-c_0)^{2/3}-u^{-2}(c-c_0)\Big\}\subset\{H_1\leq c\}.
\]
Therefore, arguing as above we obtain that 
\[
\frac{\tilde I(c)}{c-c_0}\geq \frac{ u^{-1}(c-c_0)^{1/2}\big(C^{-2}(c-c_0)^{2/3}-u^{-2}(c-c_0)\big)^{1/2}}{c-c_0}=\frac{1}{uC(c-c_0)^{1/6}}\Big(1-\frac{C^2(c-c_0)^{1/3}}{u^2}\Big)^{1/2}.
\]
Since the rightmost term diverges as $c$ tends to $c_0$, we again get \eqref{e:qdeg}.

Let us assume now that $q_*$ is non-degenerate. There are Darboux coordinates centered at $q_*$ such that $H_1=c_0+\tfrac12ur^2+o(r^2)$, where $u:=\sqrt{\det\mathrm{Hess}\, H_1 (q_*)}$. Then $\nabla H_1=ur\partial_r+o(r)$ and
\[
\frac{\nabla H_1}{|\nabla H_1|^2}=\frac{1}{ur}\partial_r+Y,\qquad \text{where}\quad \lim_{r\to 0}r|Y|=0.
\]
We compute
\[
\frac{\diff\tilde I}{\diff c}=\int_{L_c}\iota_{\frac{\nabla H_1}{|\nabla H_1|^2}}\omega=\int_{L_c}\Big(\iota_{\frac{1}{ur}\partial_r}\omega+\iota_{Y}\omega\Big)=\int_{L_c}\Big(\frac{\diff\theta}{u}+\iota_Y\omega\Big)=\frac{2\pi}{u}+\int_{L_c}\iota_Y\omega.
\]
If $\diff s$ is the arc-length of $L_c$, then there exists $C_1>0$ such that $\iota_Y\omega|_{L_c}\leq C_1\diff s$. From the expansion of $H_1$ at $q_*$ we also see that there exists $C_2>0$ such that $\mathrm{length}(L_c)\leq C_2\max_{L_c}r$. Thus
\[
\lim_{c\to c_0}\Big|\int_{L_c}\iota_Y\omega\Big|\leq \lim_{c\to c_0}\max_{L_c}|Y|C_1\mathrm{length}(L_c)\leq\lim_{c\to c_0}\max_{L_c}|Y|C_1C_2\max_{L_c}r=C_1C_2\lim_{r\to0}r|Y|=0
\]
and the first limit in \eqref{e:qnondeg} follows. For the second limit, we notice that in coordinates where $H_1=c_0+\tfrac12r^2$, we have $\eta=\rho \diff\theta$. For $c>c_0$ we define $r_c$ via the equation $c=c_0+\tfrac12r_c^2$. Thanks to Stokes' theorem, we have
\[
\frac{\diff^2\tilde I}{\diff c^2}=\int_{L_c}\frac{1}{r}\iota_{\partial_r}\diff\eta=\int_{0}^{2\pi}\frac{\partial_r\rho(r_c,\theta)}{r_c}\diff\theta.
\]
Since $\partial_r\rho(0,\theta)=\diff\rho\cdot e^{i\theta}$, we have $\int_{0}^{2\pi}\partial_r\rho(0,\theta)\diff\theta=0$, and hence
\begin{align*}
\lim_{c\to c_0}\frac{\diff^2\tilde I}{\diff c^2}& =\lim_{c\to c_0}\int_{0}^{2\pi}\frac{\partial_r\rho(r_c,\theta)-\partial_r\rho(0,\theta)}{r_c}\diff\theta\\
							& =\int_{0}^{2\pi}\partial^2_{rr}\rho(0,\theta)\diff\theta\\
							&=\int_{0}^{2\pi}\mathrm{Hess}\, \rho (q_*)[e^{i\theta},e^{i\theta}]\diff\theta\\
&=\pi \mathrm{Trace}(\mathrm{Hess}\, \rho(q_*))\\
&=\pi\Delta\rho(q_*).\qedhere
\end{align*}
\end{proof}
From the lemma above we deduce immediately the following result which, together with Remark \ref{r:idc}, yields Theorem \ref{t:S2}.
\begin{lem}
Suppose that $M=S^2$ and that $H_1$ has exactly one non-degenerate minimum at $q_{\min}\in S^2$ and one non-degenerate maximum at $q_{\max}\in S^2$ so that there exists a family of embedded circles $c\mapsto L_c$, $c\in(H_1(q_{\min}),H_1(q_{\max}))$. Assume that
\[
\sqrt{\det \mathrm{Hess}\, \zeta ({q_{\min}})}\neq\sqrt{\det \mathrm{Hess}\, \zeta ({q_{\max}})},
\]
where the Hessian is taken in Darboux coordinates for $\mu$. Then 
$$\frac{\diff \tilde I}{\diff c}:(H_1(q_{\min}),H_1(q_{\max}))\to\R$$
is not constant.\hfill\qed 
\end{lem}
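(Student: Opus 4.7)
The strategy is to apply Lemma \ref{l:minpoint} at the two non-degenerate critical points of $H_1$ and to read off distinct values for the boundary limits of $\frac{\diff \tilde I}{\diff c}$, forcing this function to be non-constant on its domain.

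First, since $q_{\min}$ is a non-degenerate local minimum of $H_1$, the non-degenerate case of Lemma \ref{l:minpoint} (see the first limit in \eqref{e:qnondeg}) applied at $q_{\min}$ yields
$$\lim_{c\to H_1(q_{\min})^+}\frac{\diff\tilde I}{\diff c}=\frac{2\pi}{\sqrt{\det\mathrm{Hess}\,H_1(q_{\min})}}.$$
An entirely symmetric computation at the non-degenerate local maximum $q_{\max}$---for instance, by applying Lemma \ref{l:minpoint} to $-H_1$ near its minimum at $q_{\max}$ and using $\det\mathrm{Hess}(-H_1)=\det\mathrm{Hess}\,H_1$ in dimension two---gives
$$\lim_{c\to H_1(q_{\max})^-}\frac{\diff\tilde I}{\diff c}=\frac{2\pi}{\sqrt{\det\mathrm{Hess}\,H_1(q_{\max})}}.$$

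Second, I translate the hypothesis, phrased in terms of $\zeta$, into the corresponding inequality for $H_1$. In each of the two cases of Theorem \ref{t:S2} ($\zeta=b_1^{-2}$ with $H_1=-\tfrac12 b_1^{-1}$, or $\zeta=K$ when $b_1$ is a positive constant), the functions $\zeta$ and $H_1$ are related through a smooth strictly monotone reparametrization of the target real line, so at any critical point their Hessians differ by a nonzero scalar factor. Invoking Remark \ref{r:scale} to match Darboux conventions, the assumption $\sqrt{\det\mathrm{Hess}\,\zeta(q_{\min})}\neq\sqrt{\det\mathrm{Hess}\,\zeta(q_{\max})}$ then forces $\sqrt{\det\mathrm{Hess}\,H_1(q_{\min})}\neq\sqrt{\det\mathrm{Hess}\,H_1(q_{\max})}$. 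Combined with the two limits above, this shows $\frac{\diff\tilde I}{\diff c}$ attains distinct boundary values at the endpoints of $(H_1(q_{\min}),H_1(q_{\max}))$ and is hence non-constant.

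The main point requiring care is the compatibility check of the second paragraph: tracking the explicit scalar factors in the monotone relation between $\zeta$ and $H_1$ for each case, in order to confirm that the non-equality of the Hessian determinants is genuinely preserved. Once this is settled, the non-constancy of $\frac{\diff\tilde I}{\diff c}$ follows from Lemma \ref{l:minpoint} as a one-line consequence, and feeding it into Remark \ref{r:idc} produces the non-resonant circle promised by Theorem \ref{t:S2}.
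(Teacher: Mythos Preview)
Your first paragraph is exactly the paper's argument: the lemma carries a \qed\ in its statement because it follows at once from Lemma~\ref{l:minpoint} applied at the two endpoints, giving distinct boundary limits for $\frac{\diff\tilde I}{\diff c}$.

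Your second paragraph, however, is both unnecessary and flawed. It is unnecessary because in this lemma the abstract Hamiltonian $H_1$ of Section~\ref{s:kam} is being \emph{instantiated} as $\zeta$, with symplectic form $\omega=\mu$; the simultaneous appearance of the symbols $H_1$ and $\zeta$ in the statement is a notational slip, not a signal that two different functions are involved. With the reading $H_1=\zeta$, $\omega=\mu$, the hypothesis matches the output of Lemma~\ref{l:minpoint} verbatim and no translation is needed.

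If instead you insist on reading $H_1=-\tfrac12 b_1^{-1}$ and $\zeta=b_1^{-2}$ as distinct (as in the proof of Theorem~\ref{t:kam}), your translation argument has a genuine gap. From $\zeta=4H_1^2$ one gets, at a critical point, $\mathrm{Hess}\,\zeta=8H_1\,\mathrm{Hess}\,H_1$ and hence
\[
\sqrt{\det\mathrm{Hess}\,\zeta}=8|H_1|\sqrt{\det\mathrm{Hess}\,H_1}.
\]
Since $|H_1(q_{\min})|\neq|H_1(q_{\max})|$, the inequality of the $\zeta$-Hessian determinants does \emph{not} force the inequality of the $H_1$-Hessian determinants (one can have the latter equal while the former differ). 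Remark~\ref{r:scale} does not help here: it identifies the non-resonance \emph{conditions} for the two setups, not the values of $\sqrt{\det\mathrm{Hess}}$ at the extrema. So drop the second paragraph entirely and the proof is complete.
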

\begin{proof}[Proof of Corollary \ref{c:minpoint}]
Thanks to Remark \ref{r:scale}, a circle $L$ is non-resonant for $H_1:=-\tfrac12b_1^{-1}$ with respect to $\omega=b_1\mu$ if and only if it is non-resonant for $\zeta=b_1^{-2}$ with respect to $\mu$. Let $U$ be an arbitrary neighborhood of $q_*$ which we can suppose to be diffeomorphic to a ball. We apply Theorem \ref{t:nf} to $U$ and a neighborhood $U'$ of $q_*$ such that $\bar U'\subset U$. We obtain a diffeomorphism $\Psi_\epsilon$ and a function $H_{\epsilon,\theta}=h_0(\epsilon)+\epsilon^2H_1+o(\epsilon^2)$ with $h_0(\epsilon)=0$ for every $\epsilon\in[0,\epsilon_0]$. By Lemma \ref{l:minpoint} there is a non-resonant circle $L$ for $H_1$ inside $U'$. Using the Moser twist theorem \ref{t:moser} on $L$, we can find, by choosing a smaller $\epsilon_0$, a neighborhood $U''$ of $q_*$ with the property that, for all $\epsilon\in[0,\epsilon_0]$, every flow line of $\Phi_{H_\epsilon}$ starting in $SU''$ stays in $SU'$ forever. After shrinking $\epsilon_0$ again, we have $\pi(\Psi_\epsilon(SU'))\subset U$ and there is a neighborhood $U'''$ of $q_*$ such that $SU'''\subset\Psi_\epsilon(SU'')$. We deduce that any $(g,\epsilon^{-1}b_1)$-geodesics passing through $U_1:=U'''$ will stay in $U$ forever. We have thus showed Corollary \ref{c:minpoint} for $\zeta=b_1^{-1}$. The proof for $\zeta=K$ when $b_1$ is a positive constant is completely analogous.
\end{proof}
The last criterion deals with a saddle point of $H_1$ and together with Remark \ref{r:idc} yields Theorem \ref{t:saddle}.
\begin{lem}\label{l:saddle}
	Suppose that $H_1$ has a non-degenerate saddle critical point $q_*$ such that there are no critical values of $H_1$ in the interval $(c_0,c_1)$ for $c_0:=H_1(q_*)$ and some $c_1>c_0$. Then there exists a family of circles $c\mapsto L_c$, $c\in(c_0,c_1)$ such that
	\[
	\lim_{c\to c_0}\mathrm{dist}(q_*,L_c)=0,\qquad  \lim_{c\to c_0}\frac{\diff\tilde I}{\diff c}=+\infty.
	\]
\end{lem}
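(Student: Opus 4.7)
The plan is to leverage the local Morse normal form at the saddle together with the lower bound \eqref{e:ds} on $\tfrac{\diff \tilde I}{\diff c}$. First, by the Morse lemma I pick coordinates $(x,y)$ on a neighborhood $V\ni q_*$ with $q_*\mapsto 0$ and $H_1 = c_0 + x^2 - y^2$. For $c\in(c_0,c_0+\delta)$ with $\delta>0$ small, the local level set $V\cap\{H_1=c\}$ consists of two hyperbolic arcs; choosing the branch $x>0$, it is parametrised by $(x,y)=(\sqrt{c-c_0}\cosh t,\sqrt{c-c_0}\sinh t)$.

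Next, I promote each local arc to a global embedded circle $L_c$. Since there are no critical values of $H_1$ in $(c_0,c_1)$ and $M$ is closed, every connected component of $\{H_1=c\}$ is a compact smooth $1$-manifold without boundary, hence an embedded circle. Applying the implicit function theorem to $H_1$ on the open set of regular points (equivalently, flowing along $\nabla H_1/|\nabla H_1|^2$), the component containing the chosen local arc can be tracked smoothly in $c$, producing the family $c\mapsto L_c$. Because $L_c$ contains the point with local coordinates $(\sqrt{c-c_0},0)$, the first limit $\mathrm{dist}(q_*,L_c)\to 0$ follows immediately.

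Finally, I estimate $\tfrac{\diff\tilde I}{\diff c}$ from below. Choose an auxiliary Riemannian metric on $M$ coinciding with the Euclidean metric of the chart on some $V'\Subset V$. By \eqref{e:ds},
\[
\frac{\diff\tilde I}{\diff c} \;\geq\; \delta \int_{L_c\cap V'} |\nabla H_1|^{-1}\diff s.
\]
On the hyperbolic arc one computes $|\nabla H_1|^2 = 4(x^2+y^2) = 4(c-c_0)\cosh(2t)$ and $\diff s^2 = (c-c_0)\cosh(2t)\,\diff t^2$, so the integrand collapses to $\tfrac{1}{2}\diff t$. The range of $t$ for which the arc remains in a fixed coordinate disk of radius $R$ is $|t|\leq T_c$ with $T_c=\cosh^{-1}\!\bigl(R/\sqrt{c-c_0}\bigr)\sim\tfrac{1}{2}\log\tfrac{1}{c-c_0}$, which diverges as $c\to c_0$. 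This yields the second limit.

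The main delicate point is the existence of a smooth family $L_c$: one must argue that a single local hyperbolic arc selects a unique global connected component of $\{H_1=c\}$ depending smoothly on $c$, rather than jumping between components as $c$ varies. This reduces to the fact that, on the open set $H_1^{-1}(c_0,c_1)$ of regular points, the $1$-form $\eta$ from \eqref{e:eta} defines a smooth transverse foliation whose leaves are the circles $L_c$; standard Morse-theoretic transversality then gives the smooth dependence, and the rest is the explicit computation above.
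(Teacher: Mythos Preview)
Your proof is correct and follows essentially the same strategy as the paper: Morse normal form at the saddle combined with the lower bound \eqref{e:ds}, followed by an explicit estimate showing logarithmic divergence of the integral near $c_0$. The only cosmetic difference is that the paper works in coordinates where $H_1=c_0+xy$ and integrates in $x$, whereas your hyperbolic parametrisation in the $x^2-y^2$ chart makes the integrand collapse to the constant $\tfrac{1}{2}\,\diff t$, which is arguably cleaner.
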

\begin{proof}
There is a chart $U'\to(-\delta,\delta)^2$ centered at $q_*$ such that $H_1$ has the form $H_1=c_0+xy$ in the corresponding coordinates. Since there are no critical values in $(c_0,c_1)$, there is a smooth family of circles $c\mapsto L_c$, $c\in(c_0,c_1)$ with the property that 
$$H_1(L_c)=c,\quad \text{and}\ L_c\cap U'=\{(c-c_0)\delta^{-1}<x<\delta,\ y=(c-c_0)x^{-1}\}.$$ 
We compute
\begin{align*}
\frac{\diff\tilde I}{\diff c}&\geq \delta\int_{L_c}|\nabla H_1|^{-1}\diff s\\
				&\geq \delta\int_{L_c\cap U'}|\nabla H_1|^{-1}\diff s\\
				&=\delta\int_{(c-c_0)\delta^{-1}}^\delta r^{-1}\sqrt{1+\frac{(c-c_0)^2}{x^4}}\diff x\\
				&\geq \delta\int_{(c-c_0)^{1/2}}^\delta r^{-1}\diff x\\
				&\geq \delta\int_{(c-c_0)^{1/2}}^\delta \frac{1}{2x}\diff x,
\end{align*}
where in the last inequality we used that $y=(c-c_0)x^{-1}\leq x$ if $x\geq\sqrt{c-c_0}$. It is now easy to see that the last integral diverges for $c$ tending to $c_0$. 
\end{proof}


\section{Zoll-rigidity for strong magnetic fields}
\label{s:zoll1}
In this section we give a proof of Theorem \ref{t:Zoll1} on the Zoll-rigidity for sets $R\subset(0,\infty)$ accumulating to $0$.
By definition of geodesic curvature, $(g,\epsilon^{-1}b_1)$-geodesics and $(g,-\epsilon^{-1}b_1)$-geodesics are the same curves parametrised in opposite directions. Thus we can assume that
\[
\int_Mb_1\mu\geq0.
\]
Let us first suppose that $b_1:M\to\R$ is not constant. Then $\max b_1>0$ and we fix positive numbers $\delta_1<\delta_2<\delta_3$ in the interval $(\min b_1,\max b_1)$. We define $U_i:=\{b_1>\delta_i\}$ for $i=1,2$ and let $U_3$ be a connected component of $\{b_1>\delta_3\}$ such that
\begin{equation*}\label{e:maxb}
U_3\cap\{b_1=\max b_1\}\neq\varnothing.
\end{equation*}
Since $\bar U_1\neq M$, there exists a section $W$ of $\pi:SU_1\to U_1$. Thus we can apply Theorem \ref{t:nf} with $U=U_1$, $U'=U_2$ which means that, up to shrinking $\epsilon_0$, we can find an isotopy of embeddings $\Psi_\epsilon:SU'\to SU$ for $\epsilon\in[0,\epsilon_0]$ such that
\[
\Psi_\epsilon^*\hat\omega_\epsilon=-\pi^*(b_1\mu)+\diff(H_\epsilon\diff\theta),
\]
where $\theta$ is the angular function of $W$ and $H_\epsilon:SU\to\R$ satisfies
\[
H_\epsilon=\epsilon^2H'_\epsilon,\qquad H'_0=-(2b_1)^{-1}\circ\pi.
\]
Let us remember the definition of the vector fields $X_{H_{\epsilon,\theta}}$ and $X_{H_\epsilon}$ in \eqref{e:X1}, \eqref{e:X2} and of the associated flows $\varphi_{H_\epsilon}$ and $\Phi_{H_\epsilon}$. We have
\begin{equation}\label{e:phiphi}
X_{H_{\epsilon,\theta}}=\epsilon^2X_{H'_{\epsilon,\theta}}\quad\forall\,\theta\in \T,\qquad \varphi_{H_\epsilon}^{\theta_0,\theta_1}=\varphi_{H'_\epsilon}^{\theta_0,\epsilon^2\theta_1},\qquad\forall\,\theta_0\in \T,\ \theta_1\in \R.
\end{equation}
Since $\bar U_3\subset U_2$, there exists $\ell_0>0$ such that if $q:[0,1]\to M$ is a curve with $q(0)\in U_3$ and length less than $\ell_0$, then $q([0,1])\subset U_2$. Up to shrinking $\epsilon_0$, there exists a constant $\tau_0>0$ such that $\varphi_{H'_\epsilon}^{\theta_0,\tau_1}:U_3\to U_2$ is well-defined for every $\tau_1\in[0,\tau_0]$ and every $\epsilon\in[0,\epsilon_0]$. By \eqref{e:phiphi} this implies that for every $\epsilon\in[0,\epsilon_0]$, the flow $\varphi_{H_\epsilon}^{\theta_0,\theta_1}:U_3\to U_2$ is well-defined for all $\theta_1\in[0,\epsilon^{-2}\tau_0]$ and, hence the flow $\Phi_{H_\epsilon}^{\theta_1}:SU_3\to SU_2$ is also well-defined for $\theta_1\in[0,\epsilon^{-2}\tau_0]$. Up to shrinking $\epsilon_0$, we assume that
\[
2\pi<\epsilon^{-2}_0\tau_0
\]
so that the set
\[
Q_\epsilon:=\{q\in U_3\ |\ \varphi_{H_\epsilon}^{0,2\pi}(q)=q\}
\]
is well-defined.

The set $Q_\epsilon$ plays a central role in the proof of Theorem \ref{t:Zoll1} and now we want to study it better. To this purpose, let us consider the following construction. Let $z:[0,T]\to SU_1$ be a curve. We define $\tilde\theta_z:[0,T]\to\R$ to be any real lift of the function $\theta\circ z:[0,T]\to \T$. We define
\[
\Delta\theta_z:=\tilde\theta_z(T)-\tilde\theta_z(0)=\int_0^T\diff\theta\big(\tfrac{\diff}{\diff t}z\big)\diff t.
\]
In particular, if $z(T)=z(0)$, we see that $\Delta\theta_z=2k\pi$ for some integer $k$. Let us specialize this construction further and consider for any $v\in SU_1$ the $(g,\epsilon^{-1}b_1)$-geodesic $\gamma_v:\R\to M$ with $\dot\gamma(0)=v$. We define the set
\[
S:=\{(v,\ell)\in SU_1\times (0,\infty)\ |\ \gamma_v([0,\ell])\subset U_1\}.
\]
Then the function
\begin{equation}\label{e:deltatheta}
\Delta\theta:S\to\R,\qquad \Delta\theta(v,\ell):=\Delta\theta_{\dot\gamma_v|_{[0,\ell]}}
\end{equation}
is well-defined and continuous since the function $\diff\theta\big(\tfrac{\diff}{\diff t}\dot\gamma_v\big)$ is continuous in $v$.
\begin{lem}\label{l:Q}
A point $q\in U_3$ belongs to $Q_\epsilon$ if and only if the $(g,\epsilon^{-1}b_1)$-geodesics with initial velocity vector $\Psi_\epsilon(W(q))$ has some period $\ell$, it is contained in $\Psi_\epsilon(SU_2)$ and $\Delta\theta(\Psi_\epsilon(W(q)),\ell)=2\pi$.
\end{lem}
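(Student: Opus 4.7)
The plan is to show that $q \in Q_\epsilon$ if and only if the relevant $(g,\epsilon^{-1}b_1)$-geodesic closes up in a way that matches the angular bookkeeping, by using the conjugation provided by $\Psi_\epsilon$. The first key observation I would make is that the isotopy $\Psi_\epsilon$ preserves the angular function $\theta$. Indeed, by construction in Section \ref{s:nf}, $\Psi_\epsilon$ is generated by a vector field $Z_\epsilon$ contained in the horizontal distribution $\mathcal H = \ker \diff\theta$, so $\theta \circ \Psi_\epsilon = \theta$. Together with the defining formula \eqref{e:gammaeta} for $\Psi_\epsilon$ as conjugating $\Phi_{H_\epsilon}$ to the magnetic flow via the reparametrization $t:[0,\theta_\infty)\to[0,\ell_\infty)$, and the formula \eqref{e:theta+} for the time evolution of $\theta$ under $\Phi_{H_\epsilon}$, this gives me the computational backbone.

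Next I would rewrite the condition $q\in Q_\epsilon$. Setting $z:=W(q)$ and using $\varphi^{0,2\pi}_{H_\epsilon}(q) = \pi\circ \Phi_{H_\epsilon}^{2\pi}(W(q))$, I note that $\theta(\Phi^{2\pi}_{H_\epsilon}(W(q))) = 0 + 2\pi \equiv 0 \pmod{2\pi}$ by \eqref{e:theta+}, so $\Phi^{2\pi}_{H_\epsilon}(W(q))$ lies on $\theta^{-1}(0) = W(U_1)$. Consequently $\varphi^{0,2\pi}_{H_\epsilon}(q) = q$ is equivalent to $\Phi^{2\pi}_{H_\epsilon}(W(q)) = W(q)$, i.e. the flow line $\eta_z$ is $2\pi$-periodic.

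The direct implication then follows by setting $\ell := t(2\pi)$. Since $\Psi_\epsilon(\eta_z(2\pi)) = \Psi_\epsilon(z)$, formula \eqref{e:gammaeta} gives $\dot\gamma_{\Psi_\epsilon(z)}(\ell) = \dot\gamma_{\Psi_\epsilon(z)}(0)$, so $\ell$ is a period; the geodesic arc lies in $\Psi_\epsilon(SU')=\Psi_\epsilon(SU_2)$ since $\eta_z([0,2\pi])\subset SU_2$ by the choice of $\epsilon_0$ and $\tau_0$; and because $\Psi_\epsilon$ preserves $\theta$, the total change of $\theta$ along $\dot\gamma_{\Psi_\epsilon(z)}|_{[0,\ell]}$ equals that of $\eta_z|_{[0,2\pi]}$, which is $2\pi$ by \eqref{e:theta+}, giving $\Delta\theta(\Psi_\epsilon(W(q)),\ell) = 2\pi$.

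For the converse I would pull back through $\Psi_\epsilon$. Given a period $\ell$ with $\dot\gamma_{\Psi_\epsilon(z)}([0,\ell])\subset \Psi_\epsilon(SU_2)$, apply $\Psi_\epsilon^{-1}$ to obtain a curve in $SU_2$ lifting to $\eta_z$ on some interval $[0,\theta^*]$ with $t(\theta^*)=\ell$ and $\eta_z(\theta^*) = z$. Using $\theta\circ\Psi_\epsilon = \theta$, the change of $\theta$ along the magnetic arc equals the change of $\theta$ along $\eta_z|_{[0,\theta^*]}$, which is $\theta^*$ by \eqref{e:theta+}. Combined with the hypothesis $\Delta\theta(\Psi_\epsilon(W(q)),\ell) = 2\pi$, this forces $\theta^* = 2\pi$, hence $\Phi^{2\pi}_{H_\epsilon}(W(q)) = W(q)$ and therefore $q\in Q_\epsilon$. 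The only subtlety to be careful about is tracking the real-valued lift $\Delta\theta$ rather than $\theta\pmod{2\pi}$, which is precisely why the definition \eqref{e:deltatheta} uses integration rather than the $\mathbb T$-valued function; this is the main place a sign/multiplicity mistake could creep in, and I would handle it by explicitly comparing $\int_0^\ell \diff\theta(\tfrac{\diff}{\diff t}\dot\gamma) \diff t$ with $\int_0^{\theta^*} \diff\theta(\tfrac{\diff}{\diff \theta}\eta_z)\diff\theta = \theta^*$ using the chain rule through the reparametrization $t$.
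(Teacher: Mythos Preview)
Your proposal is correct and follows essentially the same route as the paper: reduce $q\in Q_\epsilon$ to $\Phi^{2\pi}_{H_\epsilon}(W(q))=W(q)$ via \eqref{e:theta+}, then use the conjugacy \eqref{e:gammaeta} and the reparametrization $t$ to pass between periodicity of $\eta_{W(q)}$ and periodicity of $\dot\gamma_{\Psi_\epsilon(W(q))}$, while tracking $\Delta\theta$. The one noteworthy difference is in how you justify that $\Delta\theta$ is preserved under $\Psi_\epsilon$: you observe directly that $\theta\circ\Psi_\epsilon=\theta$ because $Z_\epsilon\in\mathcal H=\ker\diff\theta$, whereas the paper argues that $\Psi_\epsilon$ is isotopic to the standard inclusion and invokes homotopy invariance of the winding number along the closed curve. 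Your argument is sharper (it works pointwise, not just for closed loops) and is fully justified by the construction in Section~\ref{s:nf}; otherwise the two proofs coincide.
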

\begin{proof}
Let us take $q\in U_3$ and consider $\eta_{W(q)}$ to be the maximal solution of $\Phi_{H_\epsilon}$ with $\eta_{W(q)}(0)=W(q)$ and $\gamma_{\Psi_\epsilon(W(q))}$ to be the $(g,\epsilon^{-1}b_1)$-geodesic with $\dot\gamma_{\Psi_\epsilon(W(q))}(0)=\Psi_\epsilon(W(q))$, which are related by Equation \eqref{e:gammaeta} and the time reparametrization $t:[0,\theta_\infty)\to[0,\ell_\infty)$. Let $\ell\in[0,\ell_\infty)$ and $\theta\in[0,\theta_\infty)$ be such that $t(\theta)=\ell$. Then $\dot\gamma(\ell)=\dot\gamma(0)$ if and only if $\eta_{W(q)}(\theta)=\eta_{W(q)}(0)$. In this case, $\Delta\theta(\Psi_\epsilon(W(q)),\ell)=2k\pi$ for some $k\in\Z$. By \eqref{e:gammaeta}, it follows that
\begin{equation}\label{e:2kpi}
2k\pi=\Delta\theta(\Psi_\epsilon(W(q)),\ell)=\Delta\theta_{\Psi_\epsilon(\eta_{W(q)})|_{[0,\theta]}}=\Delta\theta_{\eta_{W(q)}|_{[0,\theta]}}=\theta,
\end{equation}
where we have used the fact that $\Psi_\epsilon$ is isotopic to the standard inclusion and that \eqref{e:theta+} holds. It is now immediate to check the equivalence in the statement of the lemma.
\end{proof}
We now show that $Q_\epsilon$ is not empty for $\epsilon$ small enough. This argument goes back to \cite{Ginzburg:1987lq} and we sketch here how to adapt it to our context.
\begin{lem}\label{l:gin}
Let $b_1$ be non-constant and let $U_1,U_2,U_3$ be chosen as above. Up to shrinking $\epsilon_0$, the set $Q_\epsilon$ is not empty for all $\epsilon\in[0,\epsilon_0]$.
\end{lem}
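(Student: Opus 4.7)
The plan is to follow the strategy of Ginzburg \cite{Ginzburg:1987lq}: produce a fixed point of the return map $\psi_\epsilon:=\varphi_{H_\epsilon}^{0,2\pi}:U_3\to U_2$ by a Brouwer-degree argument built on the normal form of Theorem \ref{t:nf}. By Lemma \ref{l:Q}, any fixed point of $\psi_\epsilon$ in $U_3$ belongs to $Q_\epsilon$, so a single such point suffices.

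First, by \eqref{e:H1} we write $H_\epsilon=\epsilon^2 H'_\epsilon$ with $H'_\epsilon\to H'_0:=-(2b_1\circ\pi)^{-1}$ in $C^2$ uniformly on $S\overline{U_3}$, so that $X_{H_{\epsilon,\theta}}=\epsilon^2 X_{H'_{\epsilon,\theta}}$. A standard Gronwall estimate for the ODE defining $\varphi_{H_\epsilon}^{0,\cdot}$, combined with a Taylor expansion at $\epsilon=0$, then yields uniformly in $q\in\overline{U_3}$ and in the $C^1$-topology
\[
\psi_\epsilon(q) = q+2\pi\epsilon^2 X_{H'_0}(q)+o(\epsilon^2),
\]
where $X_{H'_0}$ is the Hamiltonian vector field of $H'_0$ on $(U_1,b_1\mu)$; in particular, $X_{H'_0}$ is tangent to the level sets of $b_1$.

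Next, I would refine the setup so that $\delta_3$ is a regular value of $b_1$ close enough to $\max b_1$ that the component $U_3$ (containing an interior maximum point of $b_1$) is a topological disc with $\partial U_3\subset\{b_1=\delta_3\}$ a smooth embedded circle along which $db_1\neq 0$; hence $X_{H'_0}$ is tangent to $\partial U_3$ and nowhere vanishing there. By Poincar\'e--Hopf, the winding number of $X_{H'_0}$ along $\partial U_3$ equals $\chi(U_3)=1$. The displacement $F_\epsilon(q):=\psi_\epsilon(q)-q$, read in a local chart, satisfies $\epsilon^{-2}F_\epsilon\to 2\pi X_{H'_0}$ uniformly on $\overline{U_3}$, so for $\epsilon$ small it is nonvanishing on $\partial U_3$ and carries the same winding number $1$ there. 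Homotopy invariance of the Brouwer degree then forces $F_\epsilon$ to vanish at some $q\in U_3$, which is the desired fixed point of $\psi_\epsilon$.

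The main obstacle is topological: the degree step requires $\chi(U_3)\neq 0$, which is granted by the choice of $U_3$ as a disc whenever the maximum set of $b_1$ contains an isolated local maximum, but may fail if that set is a smooth critical submanifold, in which case $U_3$ is an annular tubular neighborhood of the maximum with $\chi(U_3)=0$. In that degenerate scenario one must refine the degree argument, either via a Morse--Bott perturbation of $H'_0$ that splits the critical submanifold into non-degenerate critical points, or by replacing the degree step with a variational min-max on the action functional associated to $H_\epsilon$, as in Ginzburg's original proof.
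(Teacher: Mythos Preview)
Your degree argument is sound when $U_3$ can be taken to be a disc with smooth boundary, but the lemma is stated for the generic setup fixed beforehand, where $\delta_3$ is an arbitrary value in $(\min b_1,\max b_1)$ and $U_3$ is any connected component of $\{b_1>\delta_3\}$ meeting the maximum set. You yourself flag the failure when $\chi(U_3)=0$, and the proposed fixes (Morse--Bott perturbation, or a min-max on an action functional) are not carried out, so as written the proof is incomplete for the statement at hand. A minor remark: you invoke Lemma~\ref{l:Q} to say that a fixed point of $\psi_\epsilon$ lies in $Q_\epsilon$, but this is simply the definition of $Q_\epsilon$.

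The paper takes a different route that avoids the topological obstruction entirely. It defines an action-type functional
\[
\mathbb A_\epsilon(q)=-\int_{D^2}\Gamma_q^*(b_1\mu)+\int_0^{2\pi}H_{\epsilon,\theta}(\bar\eta_q(\theta))\,\diff\theta
\]
on $U_3$, where $\bar\eta_q(\theta)=\varphi_{H_\epsilon}^{0,\theta}(q)$ and $\Gamma_q$ is a small geodesic disc capping $\bar\eta_q$, and checks that $q\in Q_\epsilon$ if and only if $\diff_q\mathbb A_\epsilon=0$. The same expansion you use gives $\mathbb A_\epsilon=\epsilon^2\big[-\pi b_1^{-1}+o(1)\big]$, and since $-\pi b_1^{-1}$ attains its maximum in the interior of $U_3$ by the very choice of $U_3$, so does $\mathbb A_\epsilon$ for $\epsilon$ small; this interior maximum is the required element of $Q_\epsilon$. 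The advantage is that the existence of an interior extremum needs no information on $\chi(U_3)$ or on the regularity of $\partial U_3$, so the argument works uniformly for every admissible choice of $U_3$. Your degree approach, by contrast, is more hands-on and pinpoints the fixed point via the boundary behaviour of $X_{H'_0}$, but at the cost of a topological hypothesis that the paper's variational argument does not require.
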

\begin{proof}
Let $\epsilon\in[0,\epsilon_0]$. For every $q\in U_3$ let us define the curve 
\[
\bar\eta_q:[0,2\pi]\to U_2,\qquad \bar\eta_q(\theta)=\varphi_{\epsilon}^{0,\theta}(q),\qquad\forall\,\theta\in[0,2\pi].
\]
Up to shrinking $\epsilon_0$, there is a function $u_q:[0,2\pi]\to T_qM$ such that $s\mapsto\exp_q(su_q(\theta))$, $s\in[0,1]$ is the unique minimizing geodesic between $q$ and $\bar\eta_q(\theta)$. We define the disc
\[
\Gamma_q:[0,1]\times [0,2\pi]\to M,\qquad \Gamma_q(s,\theta):=\exp_q(su_q(\theta)),\qquad\forall\,(s,\theta)\in[0,1]\times[0,2\pi].
\]
Let us consider the following functional
\[
\mathbb A_\epsilon:U_3\to \R,\qquad -\int_{D^2}\Gamma_q^*(b_1\mu)+\int_0^{2\pi}H_{\epsilon,\theta}(\bar\eta_q(\theta))\diff\theta.
\]
Since $\bar\eta_q$ satisfies $b_1\mu(\dot{\bar\eta}_q,\cdot\,)=-\diff H_{\epsilon,\theta}$, we obtain
\[
\diff_q\mathbb A_\epsilon\cdot\xi=\int_0^1(b_1\mu)_{\Gamma_q(s,2\pi)}\big(\tilde\xi(s),\partial_s\Gamma_q(s,2\pi)\big)\diff s,\qquad\forall\,\xi\in T_qM,
\]
where $\tilde\xi:[0,1]\to TM$ is the unique Jacobi field along the geodesic $s\mapsto \Gamma_q(s,2\pi)$ such that $\tilde\xi(0)=\xi$ and $\tilde\xi(1)=\diff_q\varphi^{0,2\pi}_\epsilon[\xi]$. We claim that $q\in Q_\epsilon$ if and only if $\diff_q\mathbb A_\epsilon=0$. We know that $q\in Q_\epsilon$, if and only if $s\mapsto\Gamma_q(s,2\pi)=q$ is constant. Thus we see that $q\in Q_\epsilon\Rightarrow \diff_q\mathbb A_\epsilon=0$. If $q\notin Q_\epsilon$, then $\partial_s\Gamma_q(0,2\pi)$ is a non-zero element of $T_qM$ and we can find $\xi\in T_qM$ such that $b_1\mu(\xi,\partial_s\Gamma_q(0,2\pi))>0$. Since $\diff\varphi_\epsilon^{0,2\pi}[\xi]$ and $\xi$ are close, we also have that $(b_1\mu)_{\Gamma_q(s,2\pi)}(\tilde\xi(s),\partial_s\Gamma_q(s,2\pi))>0$ for all $s\in[0,1]$. Thus $\diff_q\mathbb A_\epsilon\cdot\xi>0$.

We now show that $\mathbb A_\epsilon$ has a critical point by checking that $\mathbb A_\epsilon$ has an interior maximum. Indeed, by \eqref{e:phiphi}, there exists $C>0$ such that
\begin{equation}\label{e:dotbareta}
|\dot{\bar\eta}_q|\leq C\epsilon^2,
\end{equation}
which also implies that
\begin{equation}\label{e:dist}
|\mathrm{dist}(q,\bar\eta_q(\theta))|\leq 2\pi C\epsilon^2,\qquad\forall\,\theta\in[0,2\pi].
\end{equation}
Estimates \eqref{e:dotbareta} and \eqref{e:dist} together yield $|\diff\Gamma_q|\leq C'\epsilon^2$ for some constant $C'>0$, so that
\[
-\int_{D^2}\Gamma_q^*(b_1\mu)=o(\epsilon^2).
\]
On the other hand, using again \eqref{e:dist}, we see that
\[
\int_0^{2\pi}H_{\epsilon,\theta}(\bar\eta_q(\theta))\diff\theta=\int_0^{2\pi}H_{0,\theta}(q)\diff\theta+o(\epsilon^2)=-\epsilon^22\pi (2b_1)^{-1}(q)+o(\epsilon^2).
\]
Putting the two estimates together, we get
\begin{equation}\label{e:Aepsilon}
\mathbb A_\epsilon=\epsilon^2\big[-\pi b_1^{-1}+o(1)\big].
\end{equation}
By definition of $U_3$, the function $-\pi b_1^{-1}$ attains the maximum $-\pi(\max b_1)^{-1}$ in the interior. By \eqref{e:Aepsilon}, the same is true for $\mathbb A_\epsilon$.
\end{proof}
\begin{lem}\label{l:U3minusQ}
Let $b_1$ be non-constant and let $U_1,U_2,U_3$ be chosen as above. Up to shrinking $\epsilon_0$, the set $U_3\setminus Q_\epsilon$ is not empty for all $\epsilon\in(0,\epsilon_0]$. 
\end{lem}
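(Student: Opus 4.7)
The strategy is to exhibit a specific point $q_1 \in U_3$ at which the leading-order dynamics drifts non-trivially, and deduce that $\varphi_{H_\epsilon}^{0,2\pi}(q_1) \neq q_1$ for every sufficiently small $\epsilon > 0$. The core input is the expansion provided by Theorem \ref{t:nf}: $H_{\epsilon,\theta}(q) = -\epsilon^2/(2 b_1(q)) + o(\epsilon^2)$ uniformly in $\theta \in \T$. Substituting into \eqref{e:X1} yields
\[
X_{H_{\epsilon,\theta}} = \epsilon^2\, Y + o(\epsilon^2), \qquad b_1\mu(Y,\cdot\,) = \diff\bigl((2 b_1)^{-1}\bigr),
\]
where the leading-order vector field $Y$ is $\theta$-independent. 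Since $b_1$ is nowhere vanishing on $U_1$ and $\mu$ is symplectic, a direct computation shows that $Y$ is tangent to the level sets of $b_1$ and that $Y(q) \neq 0$ precisely when $\diff b_1(q) \neq 0$.

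Next, I would use the non-constancy of $b_1$ on $U_3$ to produce a point $q_1$ where $Y$ does not vanish. The open connected set $U_3$ meets both $\{b_1 = \max b_1\}$ and $\{b_1 = \delta_3\}$, so $b_1(U_3)$ contains a non-trivial open interval. By Sard's theorem we may pick a regular value $c_1 \in (\delta_3, \max b_1)$ of $b_1$ and a point $q_1 \in U_3$ with $b_1(q_1) = c_1$; then $\diff b_1(q_1) \neq 0$ and consequently $Y(q_1) \neq 0$.

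Finally, since $\|X_{H_{\epsilon,\theta}}\|_{C^0(\bar U_3)} = O(\epsilon^2)$ uniformly in $\theta$, Gronwall's inequality guarantees, after shrinking $\epsilon_0$, that the orbit $\theta\mapsto \varphi_{H_\epsilon}^{0,\theta}(q_1)$ remains inside a fixed coordinate neighborhood of $q_1$ for all $\theta\in[0,2\pi]$ and every $\epsilon\in(0,\epsilon_0]$. In those coordinates,
\[
\varphi_{H_\epsilon}^{0,2\pi}(q_1) = q_1 + \int_0^{2\pi} X_{H_{\epsilon,\theta}}\bigl(\varphi_{H_\epsilon}^{0,\theta}(q_1)\bigr)\,\diff\theta = q_1 + 2\pi\,\epsilon^2\, Y(q_1) + o(\epsilon^2).
\]
Because $Y(q_1) \neq 0$, the right-hand side differs from $q_1$ once $\epsilon_0$ is small enough, so $q_1 \in U_3\setminus Q_\epsilon$ for every $\epsilon\in(0,\epsilon_0]$.

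No serious obstacle is anticipated: the normal form in Theorem \ref{t:nf} has already done the heavy lifting by exposing the slow $\epsilon^2$-drift along regular level sets of $b_1$, and the non-constancy of $b_1$ on $U_3$ immediately supplies a point where this drift is non-zero. The argument is essentially the mirror image of Lemma \ref{l:gin}: that lemma exploits the presence of maxima of $b_1$ in $U_3$ to create points of $Q_\epsilon$ via a critical-point argument on $\mathbb A_\epsilon$, whereas here we exploit regular level sets of $b_1$ in $U_3$ to avoid $Q_\epsilon$ by a direct asymptotic expansion of the return map.
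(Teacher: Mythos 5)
Your proof is correct and is built on the same geometric observation as the paper's: select a point $q_1\in U_3$ lying on a regular level set of $b_1$ (possible since $b_1(U_3)=(\delta_3,\max b_1]$ and the paper indeed picks a regular value $\delta_4\in(\delta_3,\max b_1)$), observe that the leading-order drift vector field $Y$ determined by $b_1\mu(Y,\cdot)=\diff((2b_1)^{-1})$ is tangent to level sets of $b_1$ and nonzero at $q_1$, and conclude that the flow cannot close up in time $2\pi$. Where you differ is in the final step: the paper factors $\varphi_{H_\epsilon}^{0,\theta}(q_1)=q'_\epsilon(\epsilon^2\theta)$ via the rescaled flow $\varphi_{H'_\epsilon}$ of \eqref{e:phiphi}, shows $q'_0$ (hence $q'_\epsilon$ by $C^1$-convergence) is an embedding on a fixed interval $[0,\tau_1]$, and then notes $2\pi\leq\epsilon^{-2}\tau_1$ for $\epsilon$ small; you instead expand the time-$2\pi$ map directly, $\varphi_{H_\epsilon}^{0,2\pi}(q_1)=q_1+2\pi\epsilon^2 Y(q_1)+o(\epsilon^2)$, and invoke $Y(q_1)\neq 0$. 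Both are valid. The paper's version proves a slightly stronger fact for free (the orbit through $q_1$ is not periodic with any period up to $\epsilon^{-2}\tau_1$), which is overkill here; your direct expansion is marginally more elementary and isolates exactly what is needed, at the minor cost of needing the $o(\epsilon^2)$ remainder in Theorem \ref{t:nf} to be uniform in $C^1$ on $\overline{U_2}$, which the construction in Section \ref{s:nf} indeed provides since $H_\epsilon=\epsilon^2 H'_\epsilon$ for a smooth family $H'_\epsilon$.
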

\begin{proof}
Let $\delta_4$ be a regular value of $b_1$ in the interval $(\delta_3,\max b_1)$. By the definition of $U_3$, there exists $q\in U_3$ with $b_1(q)=\delta_4$. Then we can find $\tau_1>0$ such that the curve
\[
q'_0:[0,\tau_1]\to U_3,\qquad q'_0(\tau):=\varphi_{H'_0}^{0,\tau}(q)
\]
is an embedding since $H'_0=-b_1^{-1}\circ\pi$ and therefore $q'_0$ parametrizes a piece of the regular level set $\{b_1=\delta_4\}$. Up to shrinking $\epsilon_0$, all curves
\[
q'_\epsilon:[0,\tau_0]\to U_3\qquad q'_\epsilon(\theta):=\varphi_{H'_\epsilon}^{0,\tau}(q)
\]
are embedded because $q'_\epsilon\to q'_0$ in the $C^1$-topology as $\epsilon\to 0$.
By \eqref{e:phiphi}, there holds 
\[
q'_\epsilon(\epsilon^2 \theta)=\varphi_{H_\epsilon}^{0,\theta}(q),
\]
so that $\theta\mapsto \varphi_{H_\epsilon}^{0,\theta}(q)$ is an embedding for $\theta\in[0,\epsilon^{-2}\tau_1]$. Then $q\notin Q_\epsilon$, if $\epsilon\in(0,\epsilon_0]$ and we shrink $\epsilon_0$ so that $2\pi\leq \epsilon^{-2}_0\tau_1$.
\end{proof}
\begin{lem}\label{l:U3equalQ}
Let $b_1$ be non-constant and let $U_1,U_2,U_3$ be chosen as above. Up to shrinking $\epsilon_0$, for all $\epsilon\in(0,\epsilon_0]$ the following property holds: if the system $(g,\epsilon^{-1}b_1)$ is Zoll, then $Q_\epsilon=U_3$.
\end{lem}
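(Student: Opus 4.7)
The plan is to show that, under the Zoll assumption, every $q\in U_3$ automatically satisfies the conditions of Lemma \ref{l:Q}, so $Q_\epsilon=U_3$. The key ingredients are: the common-period property $\Phi^{T_\epsilon}_{(g,\epsilon^{-1}b_1)}=\mathrm{id}_{SM}$ of Zoll flows; a quantitative bound $T_\epsilon=O(\epsilon)$ on the minimal arc-length period; and the continuity of the winding number $\Delta\theta/(2\pi)$ on the connected set $U_3$.

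By Lemma \ref{l:gin} one can pick $q_0\in Q_\epsilon$. Lemma \ref{l:Q} together with \eqref{e:2kpi} shows that the closed magnetic geodesic through $\Psi_\epsilon(W(q_0))$ has minimal angular period $2\pi$, hence minimal arc-length period $\ell_0:=t(2\pi)$, which by Zoll equals the common minimal period $T_\epsilon$. To bound $T_\epsilon$, I compute $\dot t(\theta)=|\diff(\pi\circ\Psi_\epsilon)[X_{H_\epsilon}(\eta_{W(q_0)}(\theta))]|$ and use: the decomposition $X_{H_\epsilon}=\tilde X_{H_{\epsilon,\theta}}+\partial_\theta$ from \eqref{e:X2}; the rescaling $H_\epsilon=\epsilon^2H'_\epsilon$, which yields $|X_{H_{\epsilon,\theta}}|=O(\epsilon^2)$; the identity $\diff\pi\cdot\partial_\theta=0$; and the fact $\diff\Psi_\epsilon=\mathrm{id}+O(\epsilon)$ (since $\Psi_0=\mathrm{id}$ and $\Psi_\epsilon$ is generated by the bounded vector field $Z_\epsilon$). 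The net effect is $\dot t=O(\epsilon)$ uniformly on $\overline{SU_2}$, whence $T_\epsilon=\int_0^{2\pi}\dot t\,\diff\theta=O(\epsilon)$.

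Set $\delta:=\mathrm{dist}(\overline{U_3},\partial U_2)>0$. Up to shrinking $\epsilon_0$, I assume $T_\epsilon<\delta/2$ and $\overline{SU_3}\subset\Psi_\epsilon(SU_2)$ for every $\epsilon\in(0,\epsilon_0]$; the latter follows because $\overline{SU_3}$ is compact and lies in the interior of $SU_2$, while $\Psi_\epsilon$ is a $C^1$-small perturbation of the standard inclusion. For any $q\in U_3$, the unit-speed magnetic geodesic starting at $\Psi_\epsilon(W(q))$ traverses $M$-arc-length $T_\epsilon<\delta/2$ over its period, so its projection remains in a small ball about $q$ contained in $U_3$, and its tangent lift stays inside $SU_3\subset\Psi_\epsilon(SU_2)$. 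The map $q\mapsto\Delta\theta(\Psi_\epsilon(W(q)),T_\epsilon)$ from \eqref{e:deltatheta} is then defined and continuous on the connected set $U_3$, and it takes values in $2\pi\mathbb{Z}$ because the orbit closes after time $T_\epsilon$; hence it is constant. Since its value at $q_0$ is $2\pi$, it is $2\pi$ throughout $U_3$, and Lemma \ref{l:Q} yields $q\in Q_\epsilon$ for every $q\in U_3$.

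The main obstacle is the uniform estimate $T_\epsilon=O(\epsilon)$, which forces the closed magnetic orbits to be short enough to remain in $\Psi_\epsilon(SU_2)$; verifying that the tangent lifts (not merely their $M$-projections) stay inside this neighborhood is a secondary technical point handled by compactness of $\overline{SU_3}$ and the closeness of $\Psi_\epsilon$ to the identity.
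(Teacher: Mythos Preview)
Your argument hinges on the claim that a Zoll magnetic flow has a \emph{common} minimal arc-length period $T_\epsilon$, i.e.\ $\Phi^{T_\epsilon}_{(g,\epsilon^{-1}b_1)}=\mathrm{id}_{SM}$. The paper's definition of Zoll---that the flow induces a free circle action \emph{up to reparametrization}---does not guarantee this, and in fact it fails for the rotationally invariant Zoll systems on $\T^2$ constructed in Section~\ref{s:zoll2}: there the arc-length period of the orbit with first integral $I_0$ equals $\int_0^{2\pi} r\,b(x(\theta))^{-1}\,\diff\theta$ with $x(\theta)=B^{-1}\big(r(\sin\theta-I_0)\big)$, which genuinely depends on $I_0$ whenever $b$ is non-constant. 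Hence you cannot transport the bound $\ell(q_0)=O(\epsilon)$, which you correctly derive at the single point $q_0\in Q_\epsilon$ via the normal form, to all of $U_3$.

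This is precisely the obstacle the paper's proof sidesteps by arguing locally: $Q_\epsilon$ is closed, non-empty (Lemma~\ref{l:gin}), and---using only the \emph{continuity} of the period function $q\mapsto\ell(q)$ on a Zoll system---open, whence $Q_\epsilon=U_3$ by connectedness. Your global strategy would go through if you had a uniform $O(\epsilon)$ bound on $\ell$ over $U_3$, but establishing that via the normal form requires knowing in advance that each closed orbit stays in $\Psi_\epsilon(SU_2)$, which is exactly the conclusion you are after; the reasoning becomes circular. (A minor aside: for $q$ near $\partial U_3$ the orbit need not remain in $SU_3$ as you assert, only in a slightly larger set compactly contained in $SU_2$; this is easily repaired and is not the main issue.)
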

\begin{proof}
The set $Q_\epsilon$ is closed. Moreover, it is not empty by Lemma \ref{l:gin}. Thus by the connectedness of $U_3$, the lemma follows by showing that $Q_\epsilon$ is open, whenever $(g,\epsilon^{-1}b_1)$ is Zoll. Since $(g,\epsilon^{-1}b_1)$ is Zoll, the function $\ell:U_3\to\R$ associating to each $q\in U_3$, the period of the $(g,\epsilon^{-1}b_1)$-geodesic $\dot\gamma_{\Psi_\epsilon(W(q))}$ with initial velocity $\Psi_\epsilon(W(q))$ is continuous. Let now $q_0\in Q_\epsilon$. By Lemma \ref{l:Q}, there holds $\dot\gamma_{\Psi_\epsilon(W(q_0))}|_{[0,\ell(q_0)]}$  and $\Delta\theta(\Psi_\epsilon(W(q_0)),\ell(q_0))=2\pi$. By the continuity of $\ell$, we can also find an open neighborhood $B$ of $q_0$ inside $U_3$ such that
\[
q\in B\quad\Rightarrow\quad \dot\gamma_{\Psi_\epsilon(W(q))}\subset \Psi_\epsilon(SU_2)\subset SU_1.
\]
Thus the map $q\mapsto\Delta\theta(\Psi_\epsilon(W(q)),\ell(q))$ is well-defined on $B$, continuous and integer valued. This implies that
\[
\Delta\theta(\Psi_\epsilon(W_\epsilon(q)),\ell(q))=2\pi\qquad\forall\,q\in B.
\]
By Lemma \ref{l:Q}, this implies that $B\subset Q_\epsilon$. Since $q_0\in Q_\epsilon$ was arbitrary, we see that $Q_\epsilon$ is open. 
\end{proof}
Combining Lemma \ref{l:U3minusQ} and Lemma \ref{l:U3equalQ}, we see that if $b_1$ is not constant, then $(g,\epsilon^{-1}b_1)$ is not Zoll for any $\epsilon\in(0,\epsilon_0]$. This proves the first part of Theorem \ref{t:Zoll1}.

Let us now assume that $b_1$ is a non-zero constant, which we can take to be positive, and suppose that the Gaussian curvature $K:M\to\R$ of $g$ is not constant. We want to show that also in this case there exists $\epsilon_0>0$ such that $(g,\epsilon^{-1}b_1)$ is not Zoll for any $\epsilon\in(0,\epsilon_0]$. The argument is very similar to the one given above for a non-constant function $b_1$ and we give here only a sketch.

Let us fix $\delta_1<\delta_2<\delta_3$ in the non-empty interval $(\min K,\max K)$. We define $U_i:=\{K>\delta_i\}$ for $i=1,2$ and take $U_3$ to be a connected component of $\{K>\delta_3\}$ such that $U_3\cap\{ K=\max K\}\neq\varnothing$. We have a section $W$ of $\pi:SU_1\to U_1$ and we apply Theorem \ref{t:nf} to $U=U_1$ and $U'=U_2$. We get the Hamiltonian function
\[
H_\epsilon=-\epsilon^2(2b_1)^{-1}+\epsilon^4H''_\epsilon,\qquad H''_0=-(2b_1)^{-3}K.
\]
Since $b_1$ is constant, there holds
\begin{equation}\label{e:phiphi''}
X_{H_{\epsilon,\theta}}=\epsilon^4X_{H''_{\epsilon,\theta}},\qquad \varphi_{H_\epsilon}^{\theta_0,\theta_1}=\varphi_{H''_\epsilon}^{\theta_0,\epsilon^4\theta_1},\qquad\forall\,\theta_0\in \T,\ \theta_1\in\R.
\end{equation}
Now Lemma \ref{l:Q}, \ref{l:gin}, \ref{l:U3minusQ}, \ref{l:U3equalQ} hold for this new choice of $U_1$, $U_2$, $U_3$. More precisely, in Lemma \ref{l:gin}, we use \eqref{e:phiphi''} instead of \eqref{e:phiphi} to get
\[
\mathbb A_\epsilon=-\epsilon^2\pi b_1^{-1}+\epsilon^4\big[-2\pi (2b_1)^{-3}K+o(1)\big]
\]
which has an interior minimum in $U_3$ because $K$ has an interior maximum in $U_3$. In Lemma \ref{l:U3minusQ}, we take $\delta_4$ to be a regular value of $K$ in the interval $(\delta_3,\max K)$ and use again \eqref{e:phiphi''} instead of \eqref{e:phiphi}.

This finishes the argument for $b_1$ constant and $K$ non-constant and proves Theorem \ref{t:Zoll1}.

\section{Rotationally invariant Zoll systems}
\label{s:zoll2}

In this section we prove that rotationally invariant Zoll magnetic systems with average $1$ on the two-torus are Zoll-rigid at unbounded sets not contained in $R_{\N}$ but are Zoll-flexible at any of the unbounded discrete sets $\tfrac{1}{k}R_*$ for some $k\in\N$. To this end, we require some preliminaries referring to \cite{Asselle:2019a} for the details. As usual, we write $\T=\R/2\pi\Z$ and consider the cylinder $\T\times \R$ with coordinates $x\in\T$ and $y\in\R$. A rotationally invariant magnetic system $(g,b)$ 
on $\T\times \R$ is then given by
\begin{align*}
g=\diff x^2 + a^2(x) \diff y^2,\quad b =b(x),
\end{align*}
where $a:\T\to (0,+\infty)$ and $b:\T\to \R$ are smooth functions. Taking the quotient by the translation $(x,y)\mapsto (x,y+2\pi)$, we get a magnetic system on $\T^2$ which is Zoll at a speed $r$ if and only if the corresponding system on the cylinder is Zoll at speed $r$. Thus it is enough to consider the magnetic system on the cylinder.

Up to rescaling the $y$-variable $(x,y)\mapsto (x,cy)$ and changing $a$ to $a/c$, we can assume that $a$ integrate to $2\pi$ over $\T$. Requiring the average of $b$ to be $1$ then amounts to having 
\[
\int_{\T} a(x)b(x)\, \diff x=2\pi.
\]
If $r$ is a positive constant, the equations of motion of a $(g,r^{-1} b)$-geodesic $(x,y):\R\to \T\times\R$ whose tangent vector makes an angle $\theta:\R\to \T$ with $\partial_x$ read 
\begin{equation}
\left \{\begin{aligned}\dot x &= \cos \theta, \\
\dot y&=\frac{\sin \theta}{a(x)}, \\
\dot \theta &= r^{-1} b(x) - \frac{a'(x)}{a(x)} \sin \theta , \end{aligned}\right.
\label{equationsofmotion}
\end{equation}
Being invariant under the flow of the vector field $\partial_y$, the system admits by Noether's theorem an additional first integral. Setting $\T_{2\pi r^{-1}}:=\R/2\pi r^{-1}\Z$ and letting $B:\T\to\T$ be any function with $B'(x)=a(x)b(x)$, the first integral is given by 
\begin{equation}
I:\T\times\T \to \T_{2\pi r^{-1}},\quad I(x,\theta ) = a(x)\sin \theta - r^{-1} B(x).
\label{firstintegralI}
\end{equation}
Let us now take $g=g_\fl$, namely $a\equiv1$. Then \eqref{equationsofmotion} and \eqref{firstintegralI} reduce to 
\begin{equation*}
\left \{\begin{aligned} \dot x &= \cos \theta, \\
\dot y&=\sin \theta, \\
\dot \theta &= r^{-1} b(x) , \end{aligned}\right.\qquad I(x,\theta) = \sin \theta - r^{-1}B(x).
\end{equation*}
Let us suppose that $b>0$. In this case, the functions $t\mapsto \theta (t)$ and $x\mapsto B(x)$ are strictly monotonically increasing and we can express $x$ and $y$ as functions of $\theta$. In particular, if $I(x,y)=I_0$, then
\[
x=B^{-1}\big(r(\sin\theta-I_0)\big),\qquad \frac{\diff y}{\diff \theta}=r\sin\theta \cdot v\big(r(\sin\theta-I_0)\big),
\]
where
\[
v:\T \to \R,\qquad v(u) :=\frac{1}{b(B^{-1}(u))}.
\]
From this formulae we see that $x$ is $2\pi$-periodic in $\theta$, while there is a shift in the $y$ coordinate when $\theta$ goes from $-\pi/2$ to $\pi/2$ given by the \textit{shift function} $\Delta_r: \T_{2\pi r^{-1}}\to \R$ defined as
\begin{equation}
\Delta_r^b(I_0):=\int_{-\pi/2}^{\pi/2}\frac{\diff y}{\diff \theta} \, \diff \theta =r\int_{-\pi/2}^{\pi/2} \sin\theta \cdot v\big(r(\sin\theta-I_0)\big)\, \diff \theta=r^2\int_{-\pi/2}^{\pi/2} \cos^2\theta \cdot v'\big(r(\sin\theta-I_0)\big)\, \diff \theta,
\label{shiftfunction}
\end{equation}
where we used integration by parts. Clearly, $(g,r^{-1}b)$ is Zoll if and only if the shift function $\Delta_r$ vanishes identically. In order to check this condition, it is convenient to rewrite $\Delta_r$ slightly by making the change of variables $u=\sin\theta$:
\[
\Delta_r^b(I_0)=r^2\int_{-1}^1  \sqrt{1-u^2}\cdot v'\big (r(u-I_0)\big)\, \diff u.
\]
The function $\Delta_r^b$ has zero average and therefore it vanishes if and only if its Fourier coefficients $\widehat\Delta_r^b(k)$ for $k\in\Z\setminus\{0\}$ vanish. We claim that
\begin{equation}\label{e:fourier}
\widehat{\Delta}_r^b(k)=-i\pi r J_1(kr) \cdot \widehat{v}(-k),
\end{equation}
where $J_1:(0,\infty)\to\R$ is the first Bessel function
\[
J_1(x) = \frac{x}{\pi} \int_{-1}^1 \sqrt{1-u^2} \cdot e^{-ixu}\, \diff u.
\]
Indeed, we have
\begin{align*}
\widehat{\Delta}_r(k) &=r^2\int_{\T_{2\pi r^{-1}}} \int_{-1}^1 \sqrt{1-u^2}\cdot v'\big (r(u-I_0)\big) e^{-i kr I_0}\, \diff u \, \diff I_0\\
&= r^2\int_{-1}^1 \sqrt{1-u^2}\cdot\Big(\int_{\T_{2\pi r^{-1}}} v'\big (r(u-I_0)\big) e^{-i kr I_0}\, \diff I_0\Big)\diff u\\
&= r^2\int_{-1}^1 \sqrt{1-u^2}\cdot\Big(\int_{\T_{2\pi }} v'(s) e^{-i kr(u-r^{-1}s)}\, \diff s\Big)\diff u\\
&=r^2\int_{-1}^1 \sqrt{1-u^2}\cdot e^{-i kru}\cdot\Big(\int_{\T_{2\pi }} v'(s) e^{i ks}\, \diff s\Big)\diff u\\
&=-r^2ik\int_{-1}^1 \sqrt{1-u^2}\cdot e^{-ikr u}\cdot\Big(\int_{\T_{2\pi }} v(s) e^{i ks}\, \diff s\Big)\diff u\\
&= -i\pi r J_1(kr) \cdot \widehat{v}(-k).
\end{align*}
Let us recall the definition of the sets
\[
R_*=\{\xi\in(0,\infty)\ |\ J_1(\xi)=0\},\qquad R_{\N}=\bigcup_{k\in\N}\tfrac{1}{k}R_*.
\]
From the properties of $J_1$, we know that $R_*$ is discrete, bounded from below and unbounded from above, so that $R_{\N}$ is countable and dense in $(0,\infty)$. 

If $r\notin R_{\N}$, then the only real function $v$ satisfying \eqref{e:fourier} for all $k\in\Z$ is the constant function. Therefore,
\begin{equation}\label{e:s*}
\forall\,r\in (0,\infty)\setminus R_{\N},\qquad (g_\fl,r^{-1}b) \text{ is Zoll}\quad \Longleftrightarrow\quad b \text{ is constant}.
\end{equation} 

On the other hand, for every $k\in\N$, let us implicitly define $b_k:\T\to(0,\infty)$ with average $1$ by specifying
\[
v_k(u):=1 + \epsilon\sin (ku),\qquad \text{for some }\epsilon\in(-1,1)\setminus\{0\}.
\]
Then $\widehat v_k(k')=0$ for $|k'|\neq 0,k$ and using \eqref{e:fourier} we conclude that $\widehat\Delta^{b_k}_r=0$ if and only if $r\in \tfrac{1}{k}R_*$. In other words, we have the equivalence
\begin{equation}\label{e:b*}
(g_\fl,r^{-1}b_k)\text{ is Zoll }\quad\Longleftrightarrow\quad r\in \tfrac{1}{k}R_*.
\end{equation}
We are now in position to prove Theorem \ref{t:Zoll2}.

\begin{proof}[Proof of Theorem \ref{t:Zoll2}]
Let $R$ be any subset of $(0,\infty)$ which is unbounded from above and not contained in $R_{\N}$. Let us suppose that $(g,b)$ is a rotationally symmetric magnetic system on $\T^2$ with average $1$ such that $(g,r^{-1} b)$ is Zoll for every $r\in R$. Then \cite[Proposition 2.1]{Asselle:2019b}
implies that $g=g_\fl$ is a flat metric. From \eqref{e:s*} applied to $r\in R\setminus R_{\N}$, we deduce that $b$ is constant. This shows Statement (a). Statement (b) follows by considering the function $b_k$ constructed above and using \eqref{e:b*}.
\end{proof}

\bibliography{_biblio}
\bibliographystyle{plain}

\end{document}